\documentclass[11pt, a4paper, twoside]{article}
\usepackage{amsfonts}
\usepackage{mathrsfs}
\usepackage{amsmath, amsthm}
\usepackage{pgf,tikz}
\usepackage{amssymb}
\usepackage{fancyhdr}
\usepackage[colorlinks, linkcolor=black, anchorcolor=black, citecolor=black]{hyperref}

\setlength{\parindent}{12pt}

\setlength{\parskip}{3pt plus1pt minus2pt}

\setlength{\baselineskip}{20pt plus2pt minus1pt}

 \oddsidemargin=0cm
 \evensidemargin=0cm
 \textwidth=16cm
 \textheight=22cm
 \hoffset=0cm
 \voffset=0cm

\numberwithin{equation}{section}

\allowdisplaybreaks

\pagestyle{fancy}

\begin{document}

\fancyhf{}

\fancyhead[OR]{\thepage}

\renewcommand{\headrulewidth}{0pt}
\renewcommand{\thefootnote}{\fnsymbol {footnote}}

\theoremstyle{plain} 
\newtheorem{thm}{\indent\sc Theorem}[section] 
\newtheorem{lem}[thm]{\indent\sc Lemma}
\newtheorem{cor}[thm]{\indent\sc Corollary}
\newtheorem{prop}[thm]{\indent\sc Proposition}
\newtheorem{claim}[thm]{\indent\sc Claim}
\theoremstyle{definition} 
\newtheorem{dfn}[thm]{\indent\sc Definition}
\newtheorem{rem}[thm]{\indent\sc Remark}
\newtheorem{ex}[thm]{\indent\sc Example}
\newtheorem{notation}[thm]{\indent\sc Notation}
\newtheorem{assertion}[thm]{\indent\sc Assertion}
%
%
\numberwithin{equation}{section}
\renewcommand{\proofname}{\indent\sc Proof.} 
\def\C{\mathbb{C}}
\def\R{\mathbb{R}}
\def\Rn{{\mathbb{R}^n}}
\def\M{\mathbb{M}}
\def\N{\mathbb{N}}
\def\Q{{\mathbb{Q}}}
\def\Z{\mathbb{Z}}
\def\F{\mathcal{F}}
\def\L{\mathcal{L}}
\def\S{\mathcal{S}}
\def\supp{\operatorname{supp}}
\def\essi{\operatornamewithlimits{ess\,inf}}
\def\esss{\operatornamewithlimits{ess\,sup}}
\def\dlim{\displaystyle\lim}

\fancyhf{}

\fancyhead[EC]{W. LI, H. Wang}

\fancyhead[EL]{\thepage}

\fancyhead[OC]{Non-tangential Convergence for Schr\"{o}dinger Operators}

\fancyhead[OR]{\thepage}

\renewcommand{\headrulewidth}{0pt}
\renewcommand{\thefootnote}{\fnsymbol {footnote}}

\title{\textbf{A Note on Non-tangential Convergence for Schr\"{o}dinger Operators}
\footnotetext {This work is supported by the Natural Science Foundation of China (No.11871452); Natural Science Foundation of China (No.11701452)
China Postdoctoral Science Foundation (No.2017M613193);  Natural Science Basic Research Plan in Shaanxi Province of China (No.2017JQ1009). }
\footnotetext {{}{2000 \emph{Mathematics Subject
 Classification}: 42B20, 42B25, 35S10.}}
\footnotetext {{}\emph{Key words and phrases}: Schr\"{o}dinger operator, Non-tangential convergence. } } \setcounter{footnote}{0}
\author{
Wenjuan Li, Huiju Wang, and Dunyan Yan}

\date{}
\maketitle

\begin{abstract}
The goal of this note is to establish non-tangential convergence results  for Schr\"{o}dinger operators along restricted curves. We consider the relationship between the dimension of this kind of approach region and  the regularity for the initial data which implies convergence. As a consequence, we obtain a upper bound for $p$ such that the Schr\"{o}dinger maximal function is bounded from $H^{s}(\mathbb{R}^{n})$ to $L^{p}(\mathbb{R}^{n})$ for any $s > \frac{n}{2(n+1)}$.
\end{abstract}

\section{Introduction}
The solution to the Schr\"{o}dinger equation
\begin{equation}\label{Eq11}
i{u_t} - \Delta u = 0, \hspace{0.5cm}(x,t) \in {\mathbb{R}^n} \times \mathbb{R}^{+},
\end{equation}
with initial datum $u\left( {x,0} \right) = f,$ is formally written as
\[{e^{it\Delta }}f\left( x \right): = \int_{{\mathbb{R}^n}} {{e^{i\left( {x \cdot \xi  + t{{\left| \xi  \right|}^2}} \right)}}\widehat{f}} \left( \xi  \right)d\xi .\]
The problem about finding optimal $s$ for which
\begin{equation}\label{Eq12}
\mathop {\lim }\limits_{t \to 0^{+}} {e^{it\Delta }}f\left( x \right) = f(x) \hspace{0.2cm} a.e.
\end{equation}
whenever $f \in {H^s}\left( {{\mathbb{R}^n}} \right),$ was first considered by Carleson \cite{C}, and extensively studied by Sj\"{o}lin \cite{S} and Vega \cite{V}, who proved independently the convergence for $s > 1/2$ in all dimensions. Dahlberg-Kenig \cite{DK} showed that the convergence does not hold for $s < 1/4$ in any dimension. In 2016, Bourgain \cite{B} gave conterexample showing that convergence can fail if $s<\frac{n}{2(n+1)}$. Very recently, Du-Guth-Li \cite{DGL} and Du-Zhang \cite{DZ} obtained the sharp results by the polynomial partitioning and decoupling method.

The natural generalization of the pointwise convergence problem is to ask a.e. convergence along a wider approach region instead of vertical lines. One of such problems is to consider non-tangential convergence to the initial data, it is natural to expect that more regularity on the initial data is necessary to guarantee a.e. existence of the non-tangential limit. It was shown by Sj\"{o}lin-Sj\"{o}gren \cite{SS} that non-tangential convergence fails for $s \le n/2$, i.e., there exists a function $f \in H^{\frac{n}{2}}(\mathbb{R}^{n})$ such that
 \begin{equation*}
 \limsup_{\substack{(y,t) \rightarrow (x,0) \\ |x-y| < \gamma(t), t >0}}|e^{it\Delta}f(y)| =\infty,
 \end{equation*}
 for all $x \in \mathbb{R}^{n}$, where $\gamma$ is strictly increasing and $\gamma(0)=0$.  Cho-Lee-Vargas \cite{CLV} raised a question about how the size or dimension of the approach region and the regularity which implies convergence are related.

In \cite{CLV}, this question is  considered in the one dimensional case. More concretely, let $\Gamma_x=\{x+t\theta:t\in [-1,1] $ and $\theta\in \Theta\}$, where $\Theta$ is a given compact set in $\mathbb{R}^{1}$. In \cite{CLV}, they proved that the corresponding non-tangential convergence result holds for $s>\frac{\beta(\Theta)+1}{4}$, here $\beta(\Theta)$ denotes the upper Minkowski dimension of $\Theta$. This result in \cite{CLV} was established by the $TT^{\star}$ method and a time localizing lemma. Recently, by getting around the key localizing lemma in \cite{CLV}, Shiraki \cite{Shi} generalized this result to a wider class of equations which includes the fractional Schr\"{o}dinger equation.

However, the above question remains open in higher dimensional case until recently. In this article, we consider the non-tangential convergence problem along the approach region in $\mathbb{R}^{n}$ given by
\[\Gamma_x=\{\gamma(x,t,\theta):t\in [0,1], \theta\in \Theta\},\]
where $\Theta$ is a given compact set in $\mathbb{R}^{n}$. $\gamma$ is a map from $\mathbb{R}^{n} \times [0,1] \times \Theta$ to $\mathbb{R}^{n}$,  which satisfies $\gamma(x,0,\theta) =x$ for all $x \in \mathbb{R}^{n}$, $\theta \in \Theta$, and the following (C1)-(C3) hold:\\
(C1) For fixed $t \in [0,1]$, $\theta \in \Theta$, $\gamma$ has at least $C^{1}$ regularity in $x$, and there exists a constant $C_{1} \ge 1$
such that for each $x, x^{\prime}  \in \mathbb{R}^{n}$, $\theta \in \Theta$, $t \in [0,1]$,
\begin{equation}\label{Eq1.3}
 C_{1}^{-1}|x- x^{\prime}| \le |\gamma(x,t, \theta)-\gamma(x^{\prime},t, \theta)| \le C_{1}|x- x^{\prime}|;
\end{equation}
(C2) There exists  a constant $C_{2} >0$ such that for each $x \in \mathbb{R}^{n}$, $\theta \in \Theta$, $t,t^{\prime} \in [0,1]$,
\begin{equation}\label{Eq1.4}
|\gamma(x,t, \theta)-\gamma(x,t^{\prime}, \theta)| \le C_{2}|t- t^{\prime}|;
\end{equation}
(C3) There exists a constant $C_{3} >0$ such that for each $x \in \mathbb{R}^{n}$, $t \in [0,1]$, $\theta, \theta^{\prime} \in \Theta$,
\begin{equation}\label{Eq1.5}
|\gamma(x,t, \theta)-\gamma(x,t, \theta^{\prime})| \le C_{3}|\theta- \theta^{\prime}|.
\end{equation}
We consider the relationship between the dimension of $\Theta$ and the optimal $s$ for which
 \begin{equation}
 \lim_{\substack{(y,t) \rightarrow (x,0) \\ y\in \Gamma_x}}e^{it\Delta}f(y) = f(x) \hspace{0.2cm} a.e.
 \end{equation}
whenever $f \in {H^s}\left( {{\mathbb{R}^n}} \right)$.

We first give two examples for $\Gamma_{x}$. It is not hard to check that all the conditions mentioned above can be satisfied if we take (E1): $\gamma (x,t,\theta) = x + t\theta$, $\Theta$ is a compact subset of the unit ball in $\mathbb{R}^{n}$.
When $n=1,$ this is just the problem considered in \cite{CLV}. Another example is (E2): $\gamma (x,t,\theta) = x + t^{\theta}$, $t^{\theta} =(t^{\theta_{1}}, t^{\theta_{2}}, \cdot \cdot \cdot, t^{\theta_{n}} )$, $\theta=(\theta_1,\theta_2,\cdots,\theta_n)$, $\Theta$ is a compact subset in the first quadrant away from the axis of $\mathbb{R}^{n}$. For this example, it is worth to mention that when  $\theta$ is fixed,  Lee-Rogers \cite{LR} have obtained that the convergence along the curve $(\gamma_{\theta}(x,t),t)$ is equivalent to the convergence along the vertical line.

\begin{center}
\includegraphics[height=2cm]{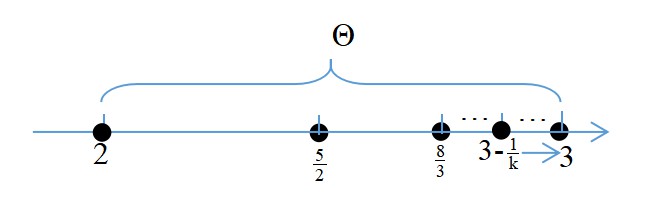}
\end{center}
\begin{center}
Figure 1. $\Theta=\{2,5/2,\cdots,3-1/k,\cdots,3:k=1,2,\cdots\}$.
\end{center}

\begin{center}
\includegraphics[height=12cm]{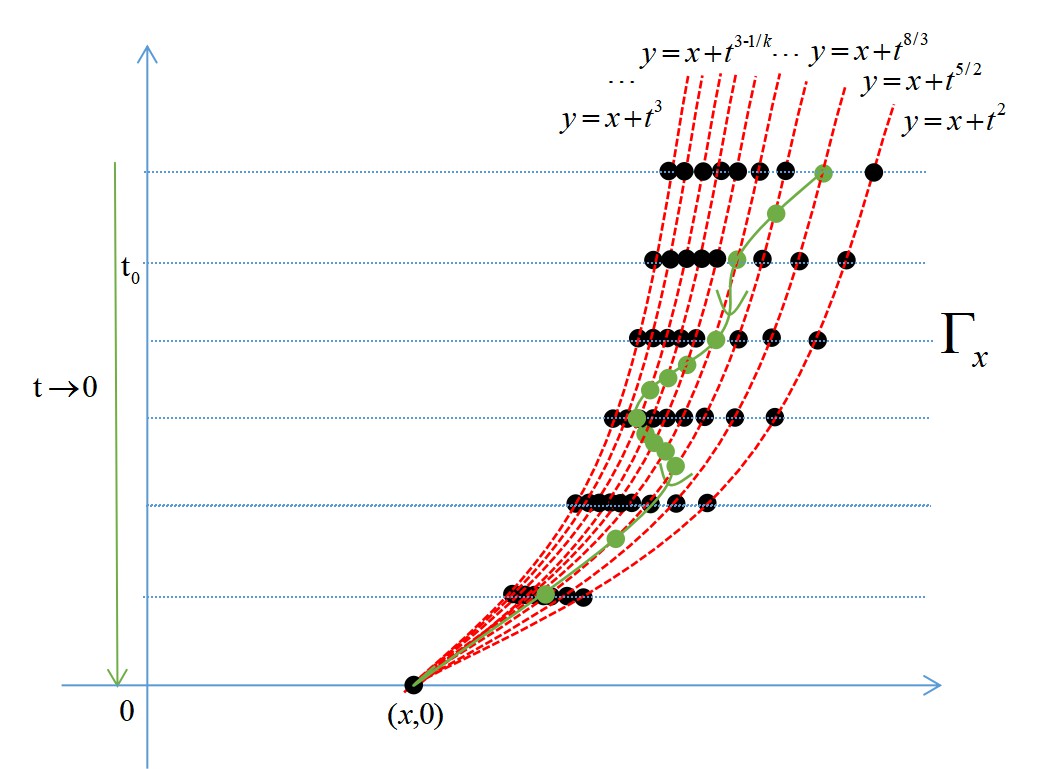}
\end{center}
\begin{center}
Figure 2. $\Gamma_x$ is consist of all black points which lie on the line $y=x+t^{\theta}$, $\theta\in \Theta$, $t\in [0,1/2]$. For every layer $t=t_0$, there are countable black points corresponding to $\Theta$. We try to seek the optimal $s$ for $e^{it\Delta}f(y)\rightarrow f(x)$ along different green-path whose points from $\Gamma_x$ whenever $f\in H^s$.
\end{center}

In order to characterize the size of $\Theta$, we introduce the so called logarithmic density or upper Minkowski dimension of $\Theta$,
\[\beta(\Theta)= \limsup_{\delta \rightarrow 0^{+}} \frac{logN(\delta)}{-log\delta},\]
where $N(\delta)$ is the minimum number of closed balls of diameter $\delta$ to cover $\Theta$. It is not hard to see that when $\Theta$ is a single point, $\beta(\Theta) =0$; when $\Theta$ is a compact subset of $\mathbb{R}^{n}$ with positive Lebesgue measure, $\beta(\Theta) =n$.

By standard arguments, in order to obtain the convergence result, it is sufficient to establish the bounded estimates for the maximal operator defined by
\[\mathop{sup}_{(t,\theta) \in (0,1) \times \Theta} |e^{it\Delta}f(\gamma(x,t,\theta))|.\]
Our main results are as follows. Firstly, we show the maximal operator estimate in the two dimensional case.

\begin{thm}\label{theorem1.1}
When $n=2$, given $B(x_{0},R) \subset \mathbb{R}^{2}$, $R \lesssim 1$, then for any $s> \frac{\beta(\Theta)+1}{3}$,
\begin{equation}\label{Eq1.7+}
\biggl\|\mathop{sup}_{(t,\theta) \in (0,1) \times \Theta} |e^{it\Delta}f(\gamma(x,t,\theta))|\biggl\|_{L^{3}(B(x_{0},R))}  \leq C\|f\|_{H^s(\mathbb{R}^2)},
\end{equation}
whenever $f\in H^s(\mathbb{R}^2)$, where the constant $C$ depends on $s$, $C_{1}$, $C_{2}$, $C_{3}$, and the chosen of $ B(x_{0}, R)$, but does not depend on $f$.
\end{thm}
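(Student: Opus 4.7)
The plan is to combine a Littlewood--Paley reduction to a single frequency scale $N$, a Morrey--Sobolev embedding in the parameter $\theta$, and the sharp two-dimensional Du--Guth--Li $L^{3}$-maximal estimate applied curve-by-curve. By a standard dyadic argument with Cauchy--Schwarz in $N$, it will suffice to prove that whenever $\widehat{f}$ is supported in $\{|\xi|\sim N\}$,
\begin{equation*}
\Big\|\sup_{(t,\theta)\in(0,1)\times\Theta}|e^{it\Delta}f(\gamma(\cdot,t,\theta))|\Big\|_{L^{3}(B(x_{0},R))}\le C_{\epsilon}\,N^{(\beta(\Theta)+1)/3+\epsilon}\|f\|_{2},
\end{equation*}
since summing dyadically in $N$ then yields the $H^{s}$-bound for any $s>(\beta(\Theta)+1)/3$.

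To prove this frequency-localized estimate, I would fix $\delta=N^{-1}$ and cover $\Theta$ by $M\lesssim\delta^{-\beta(\Theta)-\epsilon}$ balls $B(\theta_{j},\delta)\subset\mathbb{R}^{2}$, as granted by the definition of the upper Minkowski dimension. For each $j$, the Morrey--Sobolev embedding $W^{1,3}(B(\theta_{j},\delta))\hookrightarrow L^{\infty}(B(\theta_{j},\delta))$ applied to the Lipschitz map $\theta\mapsto e^{it\Delta}f(\gamma(x,t,\theta))$ gives, at every $(x,t)$,
\begin{equation*}
\sup_{\theta\in B(\theta_{j},\delta)}|e^{it\Delta}f(\gamma(x,t,\theta))|^{3}\lesssim \delta^{-2}\!\int_{B(\theta_{j},\delta)}\!|e^{it\Delta}f(\gamma(x,t,\theta))|^{3}\,d\theta+\delta\!\int_{B(\theta_{j},\delta)}\!|\nabla_{\theta} e^{it\Delta}f(\gamma(x,t,\theta))|^{3}\,d\theta.
\end{equation*}
The chain rule together with (C3) bounds $|\nabla_{\theta} e^{it\Delta}f(\gamma)|$ by $C_{3}|(e^{it\Delta}\nabla f)(\gamma)|$. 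Summing over $j\le M$, moving $\sup_{t}$ inside each $d\theta$-integral (legitimate for non-negative integrands), and then using Fubini when integrating in $x$ over $B(x_{0},R)$, reduces matters to the one-parameter bound
\begin{equation*}
\Big\|\sup_{t\in(0,1)}|e^{it\Delta}g(\gamma(\cdot,t,\theta))|\Big\|_{L^{3}(B(x_{0},R))}\le C_{\epsilon}\,N^{1/3+\epsilon}\|g\|_{2}
\end{equation*}
uniformly in $\theta$, whenever $\widehat{g}$ is supported in $\{|\xi|\sim N\}$. This last inequality is the sharp Du--Guth--Li $L^{3}$-maximal estimate in two dimensions transported along the curve $t\mapsto\gamma(x,t,\theta)$; since (C1)--(C2) make the curve a bi-Lipschitz, temporally-Lipschitz perturbation of the vertical line, a Lee--Rogers-type reduction (as mentioned in the introduction) or a direct decoupling argument yields the bound with constants depending only on $C_{1},C_{2},C_{3}$ and $B(x_{0},R)$.

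Applying the reduced one-parameter estimate with $g=f$ for the first integral and with $g=\partial_{\ell}f$ for each component of the second (using Bernstein's inequality $\|\nabla f\|_{2}\lesssim N\|f\|_{2}$), the two resulting contributions to $\|\sup_{t,\theta}|e^{it\Delta}f(\gamma(\cdot,t,\theta))|\|_{L^{3}(B(x_{0},R))}^{3}$ are of the orders
\begin{equation*}
\delta^{-2}\cdot M\delta^{2}\cdot N^{1+O(\epsilon)}\|f\|_{2}^{3}\quad\text{and}\quad\delta\cdot M\delta^{2}\cdot N^{4+O(\epsilon)}\|f\|_{2}^{3}.
\end{equation*}
With $M\lesssim N^{\beta(\Theta)+\epsilon}$ and $\delta=N^{-1}$, both terms balance at $N^{\beta(\Theta)+1+O(\epsilon)}\|f\|_{2}^{3}$, and a cube root completes the frequency-localized estimate. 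The main difficulty will be choreographing the two supremums: a naive Lipschitz-in-$\theta$ discretization of $\sup_{\theta}$ would require a useless $L^{\infty}$-Bernstein estimate on $\nabla_{y} e^{it\Delta}f$, whereas the $L^{3}$-based Sobolev embedding in $\theta$ is what lets one recycle the Du--Guth--Li $L^{3}$-bound after Fubini, and the choice $\delta=N^{-1}$ is exactly what balances the two Sobolev contributions against the Bernstein loss from the gradient.
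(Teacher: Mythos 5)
Your plan has a genuine gap in the treatment of the $\theta$-parameter. The Morrey--Sobolev step
\begin{equation*}
\sup_{\theta\in B(\theta_{j},\delta)} |u(\theta)|^{3} \lesssim \delta^{-2}\int_{B(\theta_{j},\delta)}|u|^{3}\,d\theta + \delta\int_{B(\theta_{j},\delta)}|\nabla_{\theta} u|^{3}\,d\theta, \qquad u(\theta)=e^{it\Delta}f(\gamma(x,t,\theta)),
\end{equation*}
requires $u$ to lie in $W^{1,3}$ on the \emph{full} ball $B(\theta_{j},\delta)\subset\mathbb{R}^{2}$. But $\gamma$ is only defined, and conditions (C1)--(C3) are only asserted, for $\theta\in\Theta$, and in the interesting regime $\beta(\Theta)<2$ the set $\Theta$ typically has Lebesgue measure zero: the $d\theta$-integrals then run over a set where $\gamma$ has no values and your chain-rule bound $|\nabla_{\theta}u|\lesssim C_{3}|(e^{it\Delta}\nabla f)(\gamma)|$ has no meaning. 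Extending $\gamma$ in $\theta$ does not fix this cheaply. A Kirszbraun/McShane-type extension preserves the Lipschitz bound (C3), and one can arrange (C2) and the \emph{upper} bound in (C1), but not the crucial \emph{lower} bound $C_{1}^{-1}|x-x'|\le|\gamma(x,t,\theta)-\gamma(x',t,\theta)|$, which is exactly what is needed to change variables $y=\gamma(x,t,\theta)$ and invoke the Du--Guth--Li estimate curve-by-curve; conversely, a trivial extension that freezes $\gamma(x,t,\theta)=\gamma(x,t,\theta_{j}^{0})$ off $\Theta$ keeps (C1)--(C2) but makes $u$ constant a.e.\ in $\theta$, so the $L^{3}_{\theta}$-averages no longer see the genuine $\sup$ over $\Theta\cap B(\theta_{j},\delta)$.

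The paper circumvents all of this by a pointwise Fourier-series device rather than a Sobolev embedding in $\theta$. In Lemma \ref{lemma2.1}, for $|\theta-\theta_{k}^{0}|\le\lambda^{-1}$ with both $\theta,\theta_{k}^{0}\in\Theta$, (C3) guarantees $\lambda|\gamma(x,t,\theta)-\gamma(x,t,\theta_{k}^{0})|\le C_{3}$, so the modulation $\phi(\eta)e^{i\lambda[\gamma(x,t,\theta)-\gamma(x,t,\theta_{k}^{0})]\cdot\eta}$ admits a Fourier expansion $\sum_{\mathfrak l}c_{\mathfrak l}e^{i\mathfrak l\cdot\eta}$ with $|c_{\mathfrak l}|\lesssim(1+|\mathfrak l|)^{-n-1}$ uniformly, and one obtains the \emph{pointwise} domination
\begin{equation*}
\sup_{\theta\in\Theta_{k}}|e^{it\Delta}f(\gamma(x,t,\theta))|\lesssim\sum_{\mathfrak l\in\mathbb Z^{2}}(1+|\mathfrak l|)^{-3}\bigl|e^{it\Delta}f_{\lambda}^{\mathfrak l}(\gamma(x,t,\theta_{k}^{0}))\bigr|,\qquad \widehat{f_{\lambda}^{\mathfrak l}}(\xi)=e^{i\frac{\mathfrak l}{\lambda}\cdot\xi}\widehat f(\xi).
\end{equation*}
This costs nothing in $\lambda$ (no Bernstein loss, no gradient) and never integrates in $\theta$, so it works for an arbitrary compact $\Theta$. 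It is the replacement you need for the $W^{1,3}\hookrightarrow L^{\infty}$ step. Your bookkeeping with $M\lesssim N^{\beta(\Theta)+\epsilon}$ balls, the $\ell^{3}$-summation over $j$, and the dyadic Littlewood--Paley reduction all agree with the paper. Finally, the single-$\theta$ reduction to Du--Guth--Li is genuinely nontrivial and is where the paper spends Lemmas \ref{lemma2.2}, \ref{lemma2.3} and the time-localization Theorem \ref{theorem2.4}, taking care to keep the constant independent of $k$ (using compactness of $\Theta$ and (C2) to bound $|\gamma|$ on $B(x_{0},R)\times(0,1)\times\Theta$); a one-line appeal to ``a Lee--Rogers-type reduction'' glosses over this, though it is a secondary concern compared to the domain issue above.
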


Then we obtain the following non-tangential convergence result.

\begin{thm}\label{theorem1.2}
When $n=2$, if $s> \frac{\beta(\Theta)+1}{3}$, then
\begin{equation}\label{Eq1.8+}
 \lim_{\substack{(y,t) \rightarrow (x,0) \\ y\in\Gamma_x}}e^{it\Delta}f(y) = f(x) \hspace{0.2cm} a.e.
\end{equation}
whenever $f\in H^s(\mathbb{R}^2)$.
\end{thm}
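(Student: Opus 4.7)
The plan is a standard deduction of almost-everywhere convergence from the maximal estimate in Theorem \ref{theorem1.1} via a density argument. Since the claim is local, I fix a ball $B(x_{0},R) \subset \mathbb{R}^{2}$ with $R \lesssim 1$ and aim to show that the set of $x \in B(x_{0},R)$ for which the non-tangential limit fails has Lebesgue measure zero. Write
\[ Tf(x) := \sup_{(t,\theta) \in (0,1) \times \Theta} |e^{it\Delta} f(\gamma(x,t,\theta))|, \]
and, for $\lambda > 0$, set $E_{\lambda} := \{x \in B(x_{0},R) : \Lambda f(x) > \lambda\}$, where $\Lambda f(x) := \limsup_{(y,t) \to (x,0),\, y \in \Gamma_{x}} |e^{it\Delta} f(y) - f(x)|$. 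It suffices to prove $|E_{\lambda}| = 0$ for every $\lambda > 0$; the full exceptional set is then $\bigcup_{k\ge 1} E_{1/k}$.

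The first step is to verify the conclusion on the dense subspace $\mathcal{S}(\mathbb{R}^{2}) \subset H^{s}(\mathbb{R}^{2})$. For Schwartz $g$,
\[ |e^{it\Delta}g(\gamma(x,t,\theta)) - g(x)| \le \int_{\mathbb{R}^{2}} |e^{it|\xi|^{2}} - 1|\,|\widehat{g}(\xi)|\, d\xi + \|\nabla g\|_{\infty}\, C_{2}\, t, \]
where I have used condition (C2) to bound $|\gamma(x,t,\theta) - x| \le C_{2} t$. Dominated convergence forces the first term to $0$ as $t \to 0^{+}$ uniformly in $x$ and $\theta$, so $\Lambda g \equiv 0$ and the non-tangential limit of $e^{it\Delta}g$ equals $g(x)$ everywhere.

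The second step is approximation combined with Chebyshev. Given $f \in H^{s}(\mathbb{R}^{2})$, choose $g_{n} \in \mathcal{S}(\mathbb{R}^{2})$ with $\|f - g_{n}\|_{H^{s}} \to 0$. Decomposing $f = g_{n} + (f - g_{n})$ and using $\Lambda g_{n} \equiv 0$ together with subadditivity,
\[ \Lambda f(x) \le T(f - g_{n})(x) + |(f - g_{n})(x)|, \]
so $E_{\lambda}$ is contained in the union of $\{T(f - g_{n}) > \lambda/2\}$ and $\{|f - g_{n}| > \lambda/2\}$, each intersected with $B(x_{0},R)$. Chebyshev's inequality combined with Theorem \ref{theorem1.1} controls the first set by $C(2/\lambda)^{3}\|f - g_{n}\|_{H^{s}}^{3}$, while Chebyshev together with the embedding $H^{s} \hookrightarrow L^{2}$ controls the second by $(2/\lambda)^{2}\|f - g_{n}\|_{L^{2}}^{2}$. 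Both tend to $0$ as $n \to \infty$, so $|E_{\lambda}| = 0$.

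The argument is otherwise routine and all of the analytic weight sits inside Theorem \ref{theorem1.1}; the only point requiring care is the uniformity in $\theta$ at the Schwartz stage, which is precisely why condition (C2) and the compactness of $\Theta$ are invoked there.
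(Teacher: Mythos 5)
Your proposal is correct and follows essentially the same density-plus-Chebyshev route as the paper. The only cosmetic difference is in the Schwartz-function step: you split $e^{it\Delta}g(\gamma)-g(x)$ as $[e^{it\Delta}g(\gamma)-g(\gamma)]+[g(\gamma)-g(x)]$ and use $\|\nabla g\|_\infty$ with (C2), while the paper splits as $[e^{it\Delta}g(\gamma)-e^{it\Delta}g(x)]+[e^{it\Delta}g(x)-g(x)]$ and applies the mean value theorem with (C2); both yield the same uniform convergence on $B(x_0,R)$, and the rest of the argument (approximation by $g_n$, Chebyshev via Theorem \ref{theorem1.1}, and $H^s\hookrightarrow L^2$) matches the paper.
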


We notice that the  convergence result obtained in Theorem \ref{theorem1.2} is sharp when $\beta(\Theta) = 0$ (\cite{DGL} and \cite{B}) or $\beta(\Theta) = 2$ (\cite{SS}). It is quite interesting to seek whether (\ref{Eq1.8+}) is sharp  when $0 < \beta(\Theta) <2$.

We briefly sketch the proof of Theorem \ref{theorem1.1}, and leave the details to Section 2. We decompose $\Theta$ into small subsets $\Theta= \cup_{k}\Theta_{k}$ with bounded overlap, for each $\Theta_{k}$, the size is small enough such that our problem can be reduced to estimate the maximal function for Schr\"{o}dinger operator along certain curves, i.e. the maximal operator defined by
\begin{equation}\label{Eq1.7}
\mathop{sup}_{t \in (0,1)} |e^{it\Delta}f(\gamma(x,t,\theta_{k}^{0})|
\end{equation}
for some $\theta_{k}^{0} \in \Theta_{k}$.
The number of $\Theta_{k}$ is determined by $\beta(\Theta)$. Finally, in order to get the bounded estimate for maximal function defined by (\ref{Eq1.7}), we still need the following theorem.
\begin{thm}\label{lemma3.2}(\cite{DGL})
 For any $s>1/3$, the following bound holds: for any function $f\in H^s(\mathbb{R}^2)$,
\begin{equation*}
\biggl\|\mathop{sup}_{0<t<1}|e^{it\Delta}f(x)|\biggl\|_{L^3(B(0,1))} \leq C_s\|f\|_{H^s(\mathbb{R}^2)}.
\end{equation*}
\end{thm}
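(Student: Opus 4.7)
The plan is to prove the frequency-localized extension estimate
\[
 \| E f \|_{L^3(B_R)} \le C_\varepsilon R^\varepsilon \| f \|_{L^2(\mathbb{R}^2)}, \qquad R \gtrsim 1,
\]
where $E f(x,t) = \int_{\mathbb{R}^2} e^{i(x\cdot\xi + t|\xi|^2)} g(\xi)\, d\xi$ with $g = \widehat{f}$ supported in the annulus $|\xi|\sim 1$, and then pass to the maximal theorem by linearizing the supremum (choosing a measurable $t:\mathbb{R}^2\to(0,1)$) and applying parabolic rescaling and Littlewood-Paley. The reduction from a local $L^3$ extension estimate with an $R^\varepsilon$ loss to the $H^s$ maximal bound for $s>1/3$ is standard ($\varepsilon$-removal of Tao and Bourgain's analysis of cone multipliers), so the heart of the proof is the extension estimate.

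First, I would perform a wave packet decomposition at scale $R^{1/2}$: write $f = \sum_T f_T$ where each $f_T$ is localized in physical space to a slab of width $R^{1/2}$ and in frequency to a cap of radius $R^{-1/2}$, so that $Ef_T$ concentrates on a tube of dimensions $R^{1/2}\times R^{1/2}\times R$. Then I would set up a double induction on the radius $R$ and on the number of tubes, with inductive hypothesis
\[
 \| E f \|_{L^3(B_R)} \le C_\varepsilon R^\varepsilon M(f)^{1/3} \| f \|_{L^2}^{2/3},
\]
where $M(f)$ counts the active tubes, so that gains in either variable during the recursion can be converted into a gain on the exponent.

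The decisive step is polynomial partitioning applied to $|Ef|^3$: pick a polynomial $P$ of degree $D\sim R^{c\varepsilon}$ whose zero set $Z(P)$ slices $B_R$ into $\sim D^3$ cells on which $\|Ef\|_{L^3}$ is equidistributed. In the \emph{cellular case}, each tube meets only $O(D)$ cells, so Hölder plus the induction on the number of wave packets yields an acceptable bound. In the \emph{algebraic case}, the mass concentrates in a neighborhood $W$ of $Z(P)$ of thickness $R^{1/2+\delta}$, which must be decomposed further into pieces where tubes are either transverse or tangent to $Z(P)$. The transverse contribution is handled by the Bennett-Carbery-Tao multilinear restriction theorem combined with Bourgain-Guth broad/narrow analysis; the tangent contribution is bounded by the Bourgain-Demeter $\ell^2$ decoupling theorem for the paraboloid, which exploits the two-dimensional algebraic structure near $Z(P)$ to pay the $R^\varepsilon$ toll while recovering the exponent $1/3$ through the refined $L^2$ strichartz-type bound on $Z(P)$.

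The main obstacle is the tangential subcase: tubes clustered along a two-dimensional algebraic surface must be controlled by extracting their $L^2$ content on the surface and feeding it into decoupling without losing the scaling needed to close the induction at exponent $1/3$. Balancing the partitioning degree $D$, the tangential thickness $R^{1/2+\delta}$, and the inductive loss $C_\varepsilon$ is delicate: $D$ must be large enough to gain from the cellular case yet small enough that the many-scale algebraic iteration terminates before the $R^\varepsilon$ budget is exhausted. Once these parameters are tuned, the extension estimate follows, and the maximal theorem for $s>1/3$ is obtained by the reductions of the first paragraph.
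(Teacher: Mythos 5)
This theorem is not proved in the paper at all: it is quoted verbatim from Du--Guth--Li \cite{DGL}, so the only ``proof'' the paper offers is the citation. Your outline is an attempt to reconstruct the argument of that reference, and while you name the right ingredients (wave packet decomposition, polynomial partitioning, cellular/algebraic dichotomy, broad--narrow analysis, decoupling, $\varepsilon$-removal), the very first reduction is to a false statement. The space-time estimate
\[
\|Ef\|_{L^{3}(B_R)}\le C_\varepsilon R^{\varepsilon}\|f\|_{L^{2}(\mathbb{R}^{2})},\qquad B_R\subset\mathbb{R}^{3},
\]
fails for frequency-localized data: take $\widehat{f}=R^{1/2}\mathbf{1}_{\theta}$ for a single cap $\theta$ of radius $R^{-1/2}$, so $\|f\|_{L^{2}}\sim 1$ while $|Ef|\sim R^{-1/2}$ on a dual tube of dimensions $R^{1/2}\times R^{1/2}\times R$; then $\int_{B_R}|Ef|^{3}\gtrsim R^{-3/2}\cdot R^{2}=R^{1/2}$, i.e. $\|Ef\|_{L^{3}(B_R)}\gtrsim R^{1/6}\|f\|_{L^{2}}$. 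No amount of linearization or $\varepsilon$-removal recovers the theorem from an estimate that is off by a power of $R$.

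What Du--Guth--Li actually prove is the mixed-norm bound
\[
\Bigl\|\sup_{0<t\le R}|e^{it\Delta}f|\Bigr\|_{L^{3}(B(0,R))}\le C_\varepsilon R^{\varepsilon}\|f\|_{L^{2}(\mathbb{R}^{2})},
\]
where the $L^{3}$ norm is taken only over the \emph{two-dimensional} spatial ball $B(0,R)\subset\mathbb{R}^{2}$; the wave packet above is then harmless because its spatial shadow has area at most $R^{3/2}$, giving $\|\sup_t|Ef_T|\|_{L^{3}_x}^{3}\lesssim R^{-3/2}\cdot R^{3/2}=1$. The entire partitioning scheme must therefore be run on the two-dimensional quantity $\int_{B(0,R)}\sup_t|Ef|^{3}\,dx$ (after linearizing $t=t(x)$), with the partitioning polynomial living on $\mathbb{R}^{2}$ rather than $\mathbb{R}^{3}$, and with the tangential case controlled by a refined $L^{2}$ estimate on the wall rather than by decoupling alone. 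Beyond this structural error, the remainder of your text is a list of techniques rather than a proof: none of the broad--narrow reduction, the induction that closes at exponent $3$, or the tangent-case estimate is actually carried out. Since the result is an external theorem being cited, the honest options are either to cite \cite{DGL} as the paper does, or to reproduce their argument in the correct mixed-norm formulation.
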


The idea to establish the bounded estimate for maximal function define by (\ref{Eq1.7}) using Theorem \ref{lemma3.2} comes from the method adopted by Lee-Rogers \cite{LR} to show equivalence between convergence result for Schr\"{o}dinger operators along smooth curves and vertical lines. However, we should be more careful since we need an estimate uniformly in $k$. In our case, this can be realized sine $\Theta$ is compact.

The above argument can also be applied to obtain the  corresponding convergence problem in general dimensions. So we have the following result.

\begin{thm}\label{theorem1.4}
For general positive integer $n$, if there exists $p >1$ such that for any $s> \frac{n}{2(n+1)}$,
 \begin{equation}\label{Eq1.8}
\biggl\|\mathop{sup}_{0<t<1}|e^{it\Delta}f(x)|\biggl\|_{L^p(B(0,1))} \leq C_s\|f\|_{H^s(\mathbb{R}^n)}
\end{equation}
whenever $f \in H^{s}(\mathbb{R}^{n})$, then given $B(x_{0},R) \subset \mathbb{R}^{n}$, $R \le 1$, for any $s> \frac{\beta(\Theta)}{p} + \frac{n}{2(n+1)}$,
\begin{equation}
\biggl\|\mathop{sup}_{(t,\theta) \in (0,1) \times \Theta} |e^{it\Delta}f(\gamma(x,t,\theta))|\biggl\|_{L^{p}(B(x_{0},R))}  \leq C\|f\|_{H^s(\mathbb{R}^n)},
\end{equation}
whenever  $f\in H^s(\mathbb{R}^n)$, where the constant $C$ depends on $s$, $C_{1}$, $C_{2}$, $C_{3}$, and the chosen of $ B(x_{0}, R)$, but does not depend on $f$.
\end{thm}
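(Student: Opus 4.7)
The plan is to imitate the argument sketched for Theorem \ref{theorem1.1}: perform a dyadic Littlewood-Paley decomposition, cover $\Theta$ at the scale dictated by its upper Minkowski dimension, reduce on each small piece to a single-curve maximal estimate, and convert that via the Lee-Rogers change of variables so that the hypothesis (\ref{Eq1.8}) can be applied.

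Write $f=\sum_{\lambda}f_\lambda$ with $\widehat{f_\lambda}$ supported in $\{|\xi|\sim\lambda\}$, $\lambda=2^j\ge 1$. Since $s>\tfrac{n}{2(n+1)}+\tfrac{\beta(\Theta)}{p}$, the dyadic sum converges provided one establishes, uniformly in $\lambda$,
\[
\Bigl\|\sup_{(t,\theta)\in(0,1)\times\Theta}|e^{it\Delta}f_\lambda(\gamma(x,t,\theta))|\Bigr\|_{L^p(B(x_0,R))}\lesssim\lambda^{\frac{n}{2(n+1)}+\frac{\beta(\Theta)}{p}+\epsilon}\|f_\lambda\|_{L^2}.
\]
Fix $\delta\sim\lambda^{-1}$. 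By the definition of upper Minkowski dimension, cover $\Theta$ by $N(\delta)\le C_\epsilon\delta^{-\beta(\Theta)-\epsilon}$ balls of diameter $\delta$ centered at points $\theta_k\in\Theta$, and let $\Theta_k$ denote the corresponding portion of $\Theta$. Taking $p$-th powers and summing,
\[
\Bigl\|\sup_{(t,\theta)}|e^{it\Delta}f_\lambda(\gamma(\cdot,t,\theta))|\Bigr\|_{L^p}^p\le\sum_{k=1}^{N(\delta)}\Bigl\|\sup_{\theta\in\Theta_k,\,t}|e^{it\Delta}f_\lambda(\gamma(\cdot,t,\theta))|\Bigr\|_{L^p}^p,
\]
so it is enough to bound each summand by $\lambda^{p(\frac{n}{2(n+1)}+\epsilon)}\|f_\lambda\|_2^p$ uniformly in $k$; the prefactor $N(\delta)\lesssim\lambda^{\beta(\Theta)+\epsilon}$ then supplies the remaining $\lambda^{\beta(\Theta)}$.

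On a small piece $\Theta_k$, condition (C3) yields $|\gamma(x,t,\theta)-\gamma(x,t,\theta_k)|\le C_3\delta$ for every $\theta\in\Theta_k$. Because $\lambda\delta\lesssim 1$ and $\widehat{f_\lambda}$ is localized at frequency $\lambda$, a Bernstein/oscillation argument replaces the $\theta$-supremum over $\Theta_k$ by the value at $\theta_k$, up to an error term that itself obeys a single-curve maximal bound of the same quality. This reduces matters to the single-curve estimate
\[
\Bigl\|\sup_{t\in(0,1)}|e^{it\Delta}f_\lambda(\gamma(\cdot,t,\theta_k))|\Bigr\|_{L^p(B(x_0,R))}\lesssim\lambda^{\frac{n}{2(n+1)}+\epsilon}\|f_\lambda\|_2,
\]
uniformly in $k$. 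For the latter, I would follow Lee-Rogers \cite{LR}: by (C1) the map $x\mapsto\gamma(x,t,\theta_k)$ is bi-Lipschitz in $x$ with constants independent of $(t,\theta_k)$, so a change of variables combined with (C2)-(C3) transforms the single-curve maximal function into a standard vertical-line Schr\"{o}dinger maximal function, at which point the hypothesis (\ref{Eq1.8}) applies to the transformed frequency-localized datum. The compactness of $\Theta$ and the uniformity of $C_1,C_2,C_3$ make the resulting constant genuinely independent of $k$.

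The main obstacle is organizing these reductions so that nothing is lost in $\lambda$: the $\theta$-supremum reduction must introduce no loss worse than $\lambda^{\beta(\Theta)/p+\epsilon}$ after summation against $N(\delta)$, and the Lee-Rogers change of variables must produce a constant that is genuinely uniform over the compact set $\Theta$. Both are handled by the precise matching $\delta\sim\lambda^{-1}$ between the spatial scale of variation and the frequency scale, combined with the uniform bounds in (C1)-(C3).
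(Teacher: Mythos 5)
Your plan matches the paper's proof almost point for point: Littlewood--Paley reduction to frequency-localized $f_\lambda$, a cover of $\Theta$ at scale $\lambda^{-1}$ with $\lesssim\lambda^{\beta(\Theta)+\epsilon}$ pieces, an $\ell^p$-summation over the pieces to produce the $\lambda^{\beta(\Theta)/p}$ factor, and reduction of each piece to a single-curve maximal estimate which is then compared to the hypothesized vertical-line bound (\ref{Eq1.8}) by the Lee--Rogers method.

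Two places where your sketch compresses steps that are genuinely needed, and where the paper supplies specific lemmas. First, your ``Bernstein/oscillation argument'' replacing the $\theta$-supremum over $\Theta_k$ by the value at $\theta_k$ cannot literally be a pointwise Bernstein-plus-mean-value bound, because $|\nabla e^{it\Delta}f_\lambda|$ is not controlled pointwise by $\lambda|e^{it\Delta}f_\lambda|$. The paper's mechanism (Lemma \ref{lemma2.1}) is a Fourier-series expansion of the bounded phase $e^{i\lambda[\gamma(x,t,\theta)-\gamma(x,t,\theta_k^0)]\cdot\eta}$, which dominates the $\theta$-supremum by a rapidly weighted sum of single-curve maximal functions applied to frequency-modulated copies $f_\lambda^{\mathfrak{l}}$ of $f_\lambda$; these have the same $L^2$ norm, so the sum closes. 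Second, and more importantly, your passage from the single-curve maximal function to the vertical-line estimate is not a single change of variables: the diffeomorphism $x\mapsto\gamma(x,t,\theta_k^0)$ depends on $t$, so it cannot be applied uniformly over $t\in(0,1)$. The paper first invokes a time-localization lemma (Theorem \ref{theorem2.4}, from Lee--Rogers) to reduce to $t$ in intervals $I$ of length $\lambda^{-1}$, then uses the analogue of the Fourier-expansion trick in $t$ (Lemma \ref{lemma2.2}) to freeze the time in the spatial argument at $t_I^0$, and only then performs the $t$-independent change of variables $x\mapsto\gamma(x,t_I^0,\theta_k^0)$, which by (C1) is uniformly bi-Lipschitz, so that (\ref{Eq1.8}) applies. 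Your appeal to compactness of $\Theta$ and uniformity of $C_1,C_2,C_3$ is exactly the right point for getting a constant independent of $k$, but without the time localization the reduction does not go through.

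These are omissions in exposition rather than a wrong route; once the time-localization step and the Fourier-expansion lemmas are made explicit, the argument is the paper's.
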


Theorem \ref{theorem1.4} implies that if (\ref{Eq1.8}) holds for some $p >1$ whenever $f \in H^{s}(\mathbb{R}^{n})$ for any $s> \frac{n}{2(n+1)}$, then for any $s> \frac{\beta(\Theta)}{p} + \frac{n}{2(n+1)}$,
\begin{equation}
 \lim_{\substack{(y,t) \rightarrow (x,0) \\ y\in \Gamma_x}}e^{it\Delta}f(y) = f(x) \hspace{0.2cm} a.e.
 \end{equation}
whenever $f \in H^{s}(\mathbb{R}^{n})$. Then it comes to the question about what is the optimal $p$ for (\ref{Eq1.8}) to hold for any $s> \frac{n}{2(n+1)}$. This question is still open to our best knowledge, but combining with the counterexample given by Sj\"{o}lin-Sj\"{o}gren \cite{SS}, we get a upper bound for $p$.

\begin{thm}\label{theorem1.5}
For general positive integer $n$, if there exists $p >1$ such that for any $s> \frac{n}{2(n+1)}$, (\ref{Eq1.8}) holds
whenever $f \in H^{s}(\mathbb{R}^{n})$, then $p \le \frac{2(n+1)}{n}$.
\end{thm}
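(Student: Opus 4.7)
The strategy is to turn the hypothesized maximal bound (\ref{Eq1.8}) into a non-tangential convergence statement via Theorem \ref{theorem1.4}, then rule out that statement with the Sj\"{o}lin-Sj\"{o}gren counterexample \cite{SS}. Specifically, I will specialize to an approach region of maximal upper Minkowski dimension. Take $\Theta = \overline{B(0,1)} \subset \mathbb{R}^{n}$ and $\gamma(x,t,\theta) = x + t\theta$ (example (E1)); these trivially satisfy (C1)--(C3) with $C_{1} = C_{2} = C_{3} = 1$. Since $\Theta$ has positive Lebesgue measure, $\beta(\Theta) = n$. Moreover, every $y \in \overline{B(x,1)}$ can be written as $x + s\theta$ with $s = |y-x| \in [0,1]$ and $\theta = (y-x)/|y-x| \in \overline{B(0,1)}$ (and $y = x + 0 \cdot 0$ when $y=x$), so $\Gamma_{x} = \overline{B(x,1)}$; for $y$ close enough to $x$, the constraint $y \in \Gamma_{x}$ is vacuous, so the limit $\lim_{(y,t) \to (x,0),\, y \in \Gamma_{x}}$ coincides with the unrestricted limit $\lim_{(y,t)\to(x,0)}$, and is in particular stronger than any cone-shaped non-tangential approach.

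Next, I argue by contradiction: suppose $p > \frac{2(n+1)}{n}$. Then $\frac{n}{p} < \frac{n^{2}}{2(n+1)}$, so
$$\frac{n}{p} + \frac{n}{2(n+1)} < \frac{n^{2}}{2(n+1)} + \frac{n}{2(n+1)} = \frac{n(n+1)}{2(n+1)} = \frac{n}{2},$$
and I may choose $s^{\ast}$ with $\frac{n}{p} + \frac{n}{2(n+1)} < s^{\ast} < \frac{n}{2}$; note $s^{\ast} > \frac{n}{2(n+1)}$ automatically, so the hypothesis of Theorem \ref{theorem1.4} applies. For the $\Theta$ and $\gamma$ just fixed (with $\beta(\Theta) = n$), Theorem \ref{theorem1.4} yields, for every $f \in H^{s^{\ast}}(\mathbb{R}^{n})$,
$$\lim_{\substack{(y,t) \to (x,0) \\ y \in \Gamma_{x}}} e^{it\Delta}f(y) = f(x) \qquad \text{a.e. } x \in \mathbb{R}^{n},$$
which by the preceding paragraph forces the unrestricted limit of $e^{it\Delta}f(y)$ as $(y,t) \to (x,0)$ to equal $f(x)$ for a.e.\ $x$.

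Finally, I invoke the Sj\"{o}lin-Sj\"{o}gren counterexample \cite{SS}: there exists $f_{0} \in H^{n/2}(\mathbb{R}^{n})$ such that
$$\limsup_{\substack{(y,t) \to (x,0) \\ |y-x| < t,\, t > 0}} |e^{it\Delta}f_{0}(y)| = \infty \qquad \text{for every } x \in \mathbb{R}^{n}.$$
Since $s^{\ast} < n/2$, the inclusion $H^{n/2}(\mathbb{R}^{n}) \subset H^{s^{\ast}}(\mathbb{R}^{n})$ gives $f_{0} \in H^{s^{\ast}}(\mathbb{R}^{n})$. Applying the previous paragraph to $f_{0}$, the unrestricted (hence cone-restricted) limit of $|e^{it\Delta}f_{0}(y)|$ at $(x,0)$ is the finite value $|f_{0}(x)|$ for a.e.\ $x$, directly contradicting the blow-up of the $\limsup$ at every $x$. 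Therefore $p \le \frac{2(n+1)}{n}$.

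The only real content is the choice of $\Theta$: one needs a compact set of full upper Minkowski dimension $\beta(\Theta) = n$ whose associated approach region $\Gamma_{x}$ contains a non-tangential cone $\{|y-x| < t\}$ as $t \to 0^{+}$, so that \cite{SS} is directly applicable. Taking $\Theta = \overline{B(0,1)}$ accomplishes both simultaneously; the remaining algebra is routine, so I do not foresee any technical obstacle.
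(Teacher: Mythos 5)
Your proof is correct and is essentially the paper's own argument: both take $\gamma(x,t,\theta)=x+t\theta$ with $\Theta$ the unit ball (you use the closed ball, the paper writes the open ball despite its own compactness hypothesis, which is a slip on the paper's side — your choice is the clean one), compute $\beta(\Theta)=n$, feed the hypothesis into Theorem~\ref{theorem1.4}, and rule out the resulting convergence statement by Sj\"olin--Sj\"ogren. The paper phrases the final step as the inequality $\frac{\beta(\Theta)}{p}+\frac{n}{2(n+1)}\ge\frac{n}{2}$, while you run the logically equivalent contradiction; there is no substantive difference.

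One point worth correcting in your write-up, though it does not affect the conclusion: you assert that because $\Gamma_x=\overline{B(x,1)}$ the constraint ``$y\in\Gamma_x$'' is vacuous near $x$ and the limit is the unrestricted one. That misreads what the notation encodes. The maximal bound of Theorem~\ref{theorem1.4} controls $e^{it\Delta}f$ only at the spacetime points $(\gamma(x,t,\theta),t)$ for $(t,\theta)\in(0,1)\times\Theta$; with your choice these are exactly the points with $|y-x|\le t$, i.e.\ the closed cone, and the convergence it yields is cone convergence, not unrestricted convergence (which the maximal bound simply does not reach). This is evident from the paper's proof of Theorem~\ref{theorem1.2}, where $\limsup_{(y,t)\to(x,0),\,y\in\Gamma_x}$ is dominated by $\sup_{(t,\theta)\in(0,1)\times\Theta}|e^{it\Delta}f(\gamma(x,t,\theta))|$, and from the paper's own statement of the resulting limit in the proof of Theorem~\ref{theorem1.5} with the condition $|x-y|<t$. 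Since cone convergence is precisely what the Sj\"olin--Sj\"ogren counterexample contradicts, your chain of implications still closes once this over-claim is replaced by the correct, weaker one.
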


The upper bound given by Theorem \ref{theorem1.5} is sharp for $n=1$ (\cite{C}) and $n=2$ (\cite{DGL}), but we do not know if it is also sharp for $n \ge 3$. By parabolic rescaling and time localizing lemma, inequality (\ref{Eq1.8}) is equivalent to
 \begin{equation}\label{Eq1.11}
\biggl\|\mathop{sup}_{0<t<\lambda}|e^{it\Delta}f(x)|\biggl\|_{L^p(B(0,\lambda))} \leq C_\epsilon \lambda^{n(\frac{1}{p}-\frac{n}{2(n+1)}) + \epsilon} \|f\|_{L^2(\mathbb{R}^n)}
\end{equation}
whenever supp$\hat{f} \subset \{\xi \in \mathbb{R}^{n}: |\xi| \sim 1\}$, where $\lambda \gg 1$. The range of $p$ has been discussed in Du-Kim-Wang-Zhang \cite{DKWZ}, but the optimal range of $p$ is still unknown.

\textbf{Conventions}: Throughout this article, we shall use the well known notation $A\gg B$, which means if there is a sufficiently large constant $G$, which does not depend on the relevant parameters arising in the context in which
the quantities $A$ and $B$ appear, such that $ A\geq GB$. We write $A\sim B$, and mean that $A$ and $B$ are comparable. By
$A\lesssim B$ we mean that $A \le CB $ for some constant $C$ independent of the parameters related to  $A$ and $B$.  Given $\mathbb{R}^{n}$, we write $B(0,1)$ instead of the unit ball $B^{n}(0,1)$ in $\mathbb{R}^{n}$ centered at the origin for short, and the same notation is valid for $B(x_{0},R)$.

\section{Proof of the main theorems for $n=2$}
\textbf{Proof of Theorem \ref{theorem1.1}.} In order to prove Theorem \ref{theorem1.1}, using Littlewood-Paley decomposition, we only need to show that for $f$ with supp$\hat{f} \subset \{\xi \in \mathbb{R}^{2}: |\xi| \sim \lambda\}$, $\lambda \gg 1$,
 \begin{equation}\label{Eq2.1}
\biggl\|\mathop{sup}_{(t,\theta) \in (0,1) \times \Theta} |e^{it\Delta}f(\gamma(x,t,\theta))|\biggl\|_{L^{3}(B(x_{0},R))} \le C\lambda^{s_{0}+\epsilon} \|f\|_{L^{2}}, \:\ \forall \epsilon >0,
\end{equation}
where $s_{0}=\frac{\beta(\Theta)+1}{3}$.

 We decompose $\Theta$ into subsets $\Theta= \cup_{k}\Theta_{k}$ with bounded overlap, where each $\Theta_{k}$ is contained in a closed ball with diameter $\lambda^{-1}$. Then we have
\begin{equation}\label{Eq2.2}
1 \le k \le \lambda^{\beta(\Theta) + \epsilon}.
\end{equation}
 We claim that for each $k$,
 \begin{equation}\label{Eq2.3}
\biggl\|\mathop{sup}_{(t,\theta) \in (0,1) \times \Theta_{k}} |e^{it\Delta}f(\gamma(x,t,\theta))|\biggl\|_{L^{3}(B(x_{0},R))} \le C\lambda^{\frac{1}{3}+\frac{2\epsilon}{3}} \|f\|_{L^{2}}.
\end{equation}
Then inequality (\ref{Eq2.1}) follows from (\ref{Eq2.2}) and (\ref{Eq2.3}).  More concretely, we have
\begin{align}
\biggl\|\mathop{sup}_{(t,\theta) \in (0,1) \times \Theta} |e^{it\Delta}f(\gamma(x,t,\theta))|\biggl\|_{L^{3}(B(x_{0},R))}
&\le \biggl(\sum_{k} \biggl\|\mathop{sup}_{(t,\theta) \in (0,1) \times \Theta_{k}} |e^{it\Delta}f(\gamma(x,t,\theta))|\biggl\|_{L^{3}(B(x_{0},R))}^{3}  \biggl)^{1/3} \nonumber\\
&\le C\biggl(\sum_{k} \lambda^{1+2\epsilon} \|f\|_{L^{2}}^{3} \biggl)^{1/3} \nonumber\\
&\le  C\lambda^{\frac{\beta{\Theta}+1}{3}+\epsilon} \|f\|_{L^{2}},
\end{align}
which implies inequality (\ref{Eq2.1}).

Now we are left to prove inequality (\ref{Eq2.3}). For this goal, we first show the following Lemma \ref{lemma2.1}. The original idea comes from Lemma 2.2 in \cite{LR}.

\begin{lem}\label{lemma2.1}
Assume that $g$ is a Schwartz function whose Fourier transform is supported in  $\{\xi \in \mathbb{R}^{n}: |\xi| \sim \lambda\}$.  If
\[ |\theta - \theta^{\prime}| \le \lambda^{-1},\]
then for each $x \in B(x_{0},R)$ and $t \in (0, 1)$,
\begin{align}\label{Eq2.5+}
|e^{it\Delta}g(\gamma(x,t,\theta))| \le \sum_{\mathfrak{l} \in \mathbb{Z}^{n}}{\frac{C}{(1+|\mathfrak{l}|)^{n+1}}\biggl|\int_{\mathbb{R}^{n}}{e^{i[\gamma(x,t,\theta^{\prime})+ \frac{\mathfrak{l}}{\lambda}]\cdot\xi+it|\xi|^{2}}\hat{g}}(\xi)d\xi \biggl|},
\end{align}
where the constant $C$ depends on $n$ and $C_{3}$ in inequality (\ref{Eq1.5}).
\end{lem}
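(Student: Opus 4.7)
The shape of the target inequality is the classical trick for replacing $\gamma(x,t,\theta)$ by a nearby $\gamma(x,t,\theta')$ at the price of an $\ell^1(\mathbb{Z}^n)$ sum of Schr\"odinger evaluations at translated points $\gamma(x,t,\theta')+\mathfrak{l}/\lambda$. The idea of Lee--Rogers is to localize the Fourier variable $\xi$ to the frequency annulus and expand the rapidly oscillating correction as a Fourier series whose coefficients decay rapidly because the phase is of unit size after rescaling.

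First I would set $y=\gamma(x,t,\theta)-\gamma(x,t,\theta')$ and note, via condition (C3) in (\ref{Eq1.5}), that $|y|\le C_3|\theta-\theta'|\le C_3\lambda^{-1}$. Choose a smooth bump $\psi$ equal to $1$ on $\{|\xi|\sim\lambda\}\supset\operatorname{supp}\hat g$ and supported in a slightly larger annulus contained in a cube $Q$ of sidelength $\sim\lambda$. Then write
\begin{equation*}
e^{it\Delta}g(\gamma(x,t,\theta))=\int_{\mathbb{R}^n}e^{iy\cdot\xi}\psi(\xi)\,e^{i\gamma(x,t,\theta')\cdot\xi+it|\xi|^2}\hat g(\xi)\,d\xi.
\end{equation*}

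Next, expand the factor $e^{iy\cdot\xi}\psi(\xi)$ as a Fourier series on $Q$ in the basis $\{e^{i\mathfrak{l}\cdot\xi/\lambda}\}_{\mathfrak{l}\in\mathbb{Z}^n}$ (after choosing $Q$ so that $\mathfrak{l}/\lambda$ is the reciprocal lattice), writing
\begin{equation*}
e^{iy\cdot\xi}\psi(\xi)=\sum_{\mathfrak{l}\in\mathbb{Z}^n}a_{\mathfrak{l}}(y)\,e^{i\mathfrak{l}\cdot\xi/\lambda},\qquad a_{\mathfrak{l}}(y)=\frac{1}{|Q|}\int_Q e^{iy\cdot\xi}\psi(\xi)e^{-i\mathfrak{l}\cdot\xi/\lambda}d\xi.
\end{equation*}
Substituting $\xi=\lambda\eta$ converts $a_{\mathfrak{l}}(y)$ into the Fourier transform of the smooth, compactly supported function $\eta\mapsto e^{i\lambda y\cdot\eta}\psi(\lambda\eta)$ evaluated at $\mathfrak{l}$; since $|\lambda y|\lesssim C_3$, repeated integration by parts (non-stationary phase in $\eta$) gives $|a_{\mathfrak{l}}(y)|\le C(n,N,C_3)(1+|\mathfrak{l}|)^{-N}$ for any $N$, in particular for $N=n+1$.

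Finally, interchange the sum and the integral (justified by the rapid decay and the Schwartz class of $g$), insert the expansion into the formula for $e^{it\Delta}g(\gamma(x,t,\theta))$, and take absolute values; this yields (\ref{Eq2.5+}) with the claimed constant. The only delicate point is the uniformity of the Fourier coefficient estimate in $y$ and $\lambda$, but this is exactly what the rescaling $\xi=\lambda\eta$ takes care of: after rescaling, the amplitude $\psi(\lambda\eta)$ has derivatives of size $O(1)$ (independent of $\lambda$) and the frequency $\lambda y$ is of size $O(C_3)$, so the non-stationary phase bound is uniform in both. I do not expect any serious obstacle beyond bookkeeping of constants.
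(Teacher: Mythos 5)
Your proposal is correct and follows essentially the same route as the paper: bound $|\gamma(x,t,\theta)-\gamma(x,t,\theta')|\le C_3\lambda^{-1}$ via (C3), multiply by a smooth bump equal to $1$ on the frequency support, expand the resulting $O(1)$-frequency oscillation as a Fourier series with rapidly decaying coefficients, and substitute back. The paper rescales to $\eta=\xi/\lambda$ first and then expands on $(-\pi,\pi)^n$, whereas you expand on a cube of side $\sim\lambda$ and rescale only to verify the coefficient decay, but this is the identical argument.
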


\begin{proof}
We introduce a cut-off function $\phi$ which is smooth and equal to $1$ on $B(0,2)$ and supported on $(-\pi, \pi)^{n}$. After scaling we have
\begin{align}
e^{it\Delta}g(\gamma(x,t,\theta))&= \lambda^{n} \int_{\mathbb{R}^{n}}{e^{i\lambda \gamma(x,t,\theta)\cdot \eta + it|\lambda \eta|^{2}} \phi(\eta)\hat{g}(\lambda \eta) d\eta} \nonumber\\
&= \lambda^{n} \int_{\mathbb{R}^{n}}{e^{i\lambda \gamma(x,t, \theta)\cdot \eta -i\lambda \gamma(x,t,\theta^{\prime}) \cdot \eta  +i\lambda \gamma(x,t,\theta^{\prime}) \cdot \eta + it|\lambda \eta|^{2}} \phi(\eta)\hat{g}(\lambda \eta) d\eta}.
\end{align}
Since it follows by inequality (\ref{Eq1.5}),
\[\lambda|\gamma(x,t,\theta)-\gamma(x,t,\theta^{\prime})| \le C_{3},\]
then by Fourier expansion,
\[\phi(\eta)e^{i\lambda[\gamma(x,t,\theta)-\gamma(x,t,\theta^{\prime})] \cdot \eta} = \sum_{\mathfrak{l} \in \mathbb{Z}^{n}}{c_{\mathfrak{l}}(x,t,\theta, \theta^{\prime})e^{i\mathfrak{l}\cdot \eta}},\]
where
\[|c_{\mathfrak{l}}(x,t,\theta, \theta^{\prime})| \le \frac{C}{(1+|\mathfrak{l}|)^{n+1}}\]
uniformly for each $\mathfrak{l} \in \mathbb{Z}^{n}$, $x \in B(x_{0},R)$ and $t \in (0, 1)$. Then we have
\begin{align}
|e^{it\Delta}g(\gamma(x,t,\theta))| &\le \sum_{\mathfrak{l} \in \mathbb{Z}^{n}}{\frac{C\lambda^{n}}{(1+|\mathfrak{l}|)^{n+1}}\biggl|\int_{\mathbb{R}^{n}}{e^{i \mathfrak{l} \cdot \eta  +i\lambda \gamma(x,t,\theta^{\prime}) \cdot \eta + it|\lambda \eta|^{2}} \hat{g}(\lambda \eta) d\eta} \biggl|} \nonumber\\
&= \sum_{\mathfrak{l} \in \mathbb{Z}^{n}}{\frac{C}{(1+|\mathfrak{l}|)^{n+1}}\biggl|\int_{\mathbb{R}^{n}}{e^{i \frac{\mathfrak{l}}{\lambda} \cdot \xi  +i \gamma(x,t,\theta^{\prime}) \cdot \xi + it|\xi|^{2}} \hat{g}(\xi) d\xi} \biggl|}, \nonumber
\end{align}
then we arrive at (\ref{Eq2.5+}).
\end{proof}

By the similar argument, we can prove the following lemma.

\begin{lem}\label{lemma2.2}
Assume that $g$ is a Schwartz function whose Fourier transform is supported in  $\{\xi \in \mathbb{R}^{n}: |\xi| \sim \lambda\}$. If
\[ |t - t^{\prime}| \le \lambda^{-1},\]
then for each $x \in B(x_{0},R)$ and $\theta \in \Theta$,
\begin{align}
|e^{it\Delta}g(\gamma(x,t,\theta))| \le \sum_{\mathfrak{l} \in \mathbb{Z}^{n}}{\frac{C}{(1+|\mathfrak{l}|)^{n+1}}\biggl|\int_{\mathbb{R}^{n}}{e^{i[\gamma(x,t^{\prime},\theta)+ \frac{\mathfrak{l}}{\lambda}]\cdot\xi+it|\xi|^{2}}\hat{g}}(\xi)d\xi \biggl|}.
\end{align}
where the constant $C$ depends on $n$ and $C_{2}$ in inequality (\ref{Eq1.4}).
\end{lem}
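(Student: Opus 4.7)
The plan is to prove Lemma \ref{lemma2.2} by mirroring the proof of Lemma \ref{lemma2.1} almost verbatim, simply interchanging the roles played by the angular parameter $\theta$ and the time parameter $t$, and invoking condition (C2) in place of (C3). The key observation is that in the statement the quadratic phase $it|\xi|^{2}$ is unchanged on the right-hand side (only $t$ appears, not $t^{\prime}$), so the substitution $t \mapsto t^{\prime}$ is to be performed \emph{only} inside the $\gamma$-factor, which is exactly where condition (C2) is available.

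First I would expand
\[
e^{it\Delta}g(\gamma(x,t,\theta))=\int_{\mathbb{R}^{n}}e^{i\gamma(x,t,\theta)\cdot\xi+it|\xi|^{2}}\hat{g}(\xi)\,d\xi,
\]
rescale $\xi=\lambda\eta$ to exploit the frequency localization $|\xi|\sim\lambda$, and multiply by a Schwartz cut-off $\phi$ equal to $1$ on $B(0,2)$ and supported in $(-\pi,\pi)^{n}$, so that $\phi(\eta)\hat{g}(\lambda\eta)=\hat{g}(\lambda\eta)$. I would then add and subtract $i\lambda\,\gamma(x,t^{\prime},\theta)\cdot\eta$ in the exponent, writing
\[
e^{i\lambda\gamma(x,t,\theta)\cdot\eta}
= e^{i\lambda[\gamma(x,t,\theta)-\gamma(x,t^{\prime},\theta)]\cdot\eta}\,e^{i\lambda\gamma(x,t^{\prime},\theta)\cdot\eta}.
\]
By hypothesis $|t-t^{\prime}|\le\lambda^{-1}$ together with (C2), we have
\[
\lambda\,|\gamma(x,t,\theta)-\gamma(x,t^{\prime},\theta)|\le\lambda C_{2}|t-t^{\prime}|\le C_{2},
\]
so the function $\phi(\eta)\,e^{i\lambda[\gamma(x,t,\theta)-\gamma(x,t^{\prime},\theta)]\cdot\eta}$ is smooth, compactly supported in $(-\pi,\pi)^{n}$, and uniformly (in $x$, $t$, $t^{\prime}$, $\theta$) bounded together with all of its derivatives. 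A standard multidimensional Fourier expansion on the torus then gives
\[
\phi(\eta)\,e^{i\lambda[\gamma(x,t,\theta)-\gamma(x,t^{\prime},\theta)]\cdot\eta}
=\sum_{\mathfrak{l}\in\mathbb{Z}^{n}}c_{\mathfrak{l}}(x,t,t^{\prime},\theta)\,e^{i\mathfrak{l}\cdot\eta},
\qquad
|c_{\mathfrak{l}}|\le \frac{C}{(1+|\mathfrak{l}|)^{n+1}},
\]
with $C$ depending only on $n$ and $C_{2}$.

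Substituting the expansion back into the integral, pulling $c_{\mathfrak{l}}$ outside by the triangle inequality, and rescaling $\eta=\xi/\lambda$ back to the original variable yields precisely the estimate in Lemma \ref{lemma2.2}. I do not anticipate any serious obstacle: the argument is essentially a rerun of Lemma \ref{lemma2.1}, and the only point worth checking is that condition (C2) (rather than (C3)) is the correct hypothesis to make the Fourier coefficients uniform, which it is since we only perturb the spatial translation $\gamma(x,t,\theta)$ in the $t$-variable while the quadratic phase $it|\xi|^{2}$ is left untouched.
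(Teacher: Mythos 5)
Your proposal is correct and reproduces exactly what the paper intends: the paper states that Lemma \ref{lemma2.2} follows ``by the similar argument'' as Lemma \ref{lemma2.1}, and your write-up carries out precisely that adaptation, substituting the $t$-variable perturbation and condition (C2) for the $\theta$-variable perturbation and condition (C3), with the quadratic phase left intact. No gaps.
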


We now prove inequality (\ref{Eq2.1}). For fixed $k$, by the construction of $\Theta_{k}$, there is a $\theta_{k}^{0} \in \Theta_{k}$ such that
\[ |\theta - \theta_{k}^{0}| \le \lambda^{-1}\]
holds for each $\theta \in \Theta_{k}$. Then according to Lemma \ref{lemma2.1},
for each $x \in B(x_{0},R)$, $t \in (0, 1)$ and $\theta \in \Theta_{k}$, we have
\begin{align}\label{Eq2.8}
|e^{it\Delta}f(\gamma(x,t,\theta))| &\le \sum_{\mathfrak{l} \in \mathbb{Z}^{2}}{\frac{C}{(1+|\mathfrak{l}|)^{3}}\biggl|\int_{\mathbb{R}^{2}}{e^{i \gamma(x,t,\theta_{k}^{0})  \cdot\xi+it|\xi|^{2}} e^{i\frac{\mathfrak{l}}{\lambda} \cdot \xi}\hat{f}}(\xi)d\xi \biggl|} \nonumber\\
&= \sum_{\mathfrak{l} \in \mathbb{Z}^{2}}{\frac{C}{(1+|\mathfrak{l}|)^{3}}\biggl|\int_{\mathbb{R}^{2}}{e^{i \gamma(x,t,\theta_{k}^{0})  \cdot\xi+it|\xi|^{2}} \hat{f_{\lambda}^{\mathfrak{l}}}}(\xi)d\xi \biggl|} \nonumber\\
&= \sum_{\mathfrak{l} \in \mathbb{Z}^{2}}{\frac{C}{(1+|\mathfrak{l}|)^{3}}\biggl| e^{it\Delta} f_{\lambda}^{\mathfrak{l}} (\gamma(x,t,\theta_{k}^{0}))  \biggl|},
\end{align}
where
\[ \hat{f_{\lambda}^{\mathfrak{l}}} (\xi)=e^{i\frac{\mathfrak{l}}{\lambda} \cdot \xi}\hat{f}(\xi). \]
It follows that
\begin{align}
&\biggl\|\mathop{sup}_{(t,\theta) \in (0,1) \times \Theta_{k}} |e^{it\Delta}f(\gamma(x,t,\theta))|\biggl\|_{L^{3}(B(x_{0},R))} \nonumber\\
&\le \sum_{\mathfrak{l} \in \mathbb{Z}^{2}} \frac{C}{(1+|\mathfrak{l}|)^{3}} \biggl\|\mathop{sup}_{(t,\theta) \in (0,1) \times \Theta_{k}} |e^{it\Delta}f_{\lambda}^{\mathfrak{l}}(\gamma(x,t,\theta_{k}^{0}))|\biggl\|_{L^{3}(B(x_{0},R))} \nonumber\\
&= \sum_{\mathfrak{l} \in \mathbb{Z}^{2}} \frac{C}{(1+|\mathfrak{l}|)^{3}} \biggl\|\mathop{sup}_{t \in (0,1)} |e^{it\Delta}f_{\lambda}^{\mathfrak{l}}(\gamma(x,t,\theta_{k}^{0}))|\biggl\|_{L^{3}(B(x_{0},R))} \nonumber\\
&\le \sum_{\mathfrak{l} \in \mathbb{Z}^{2}} \frac{C}{(1+|\mathfrak{l}|)^{3}} \lambda^{\frac{1}{3}+ \frac{2\epsilon}{3}} \|f_{\lambda}^{\mathfrak{l}}\|_{L^{2}} \nonumber\\
&\le \lambda^{\frac{1}{3}+ \frac{2\epsilon}{3}} \|f\|_{L^{2}},
\end{align}
provided that we have proved the following lemma.

\begin{lem}\label{lemma2.3}
Assume that $g$ is a Schwartz function whose Fourier transform is supported in the annulus $\{\xi \in \mathbb{R}^{2}: |\xi| \sim \lambda\}$. Then for each $k$,
\begin{align}\label{Eq2.1n}
\biggl\|\mathop{sup}_{t \in (0,1)} |e^{it\Delta}g(\gamma(x,t,\theta_{k}^{0}))|\biggl\|_{L^{3}(B(x_{0},R))} \le C \lambda^{\frac{1}{3}+ \frac{2\epsilon}{3}} \|g\|_{L^{2}},
\end{align}
where the constant $C$ is independent of $k$.
\end{lem}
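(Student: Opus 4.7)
The plan is to use Lemma \ref{lemma2.2} to discretise the time variable, then perform a bi-Lipschitz change of variables in $x$ to reduce Lemma \ref{lemma2.3} to the Du--Guth--Li estimate (Theorem \ref{lemma3.2}).

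First I would partition $(0, 1)$ into $\sim \lambda$ subintervals $\{I_{j}\}$ of length $\lambda^{-1}$, with $t_{j}$ a fixed representative of $I_{j}$. For $t \in I_{j}$ one has $|t - t_{j}| \leq \lambda^{-1}$, so Lemma \ref{lemma2.2} applies with $t' = t_{j}$ and yields the pointwise bound
\begin{equation*}
|e^{it\Delta}g(\gamma(x, t, \theta_{k}^{0}))| \leq \sum_{\mathfrak{l} \in \mathbb{Z}^{2}}\frac{C}{(1+|\mathfrak{l}|)^{3}}\bigl|e^{it\Delta}\tilde{g}_{\lambda}^{\mathfrak{l}}(\gamma(x, t_{j}, \theta_{k}^{0}))\bigr|,
\end{equation*}
where $\widehat{\tilde{g}_{\lambda}^{\mathfrak{l}}}(\xi) := e^{i\mathfrak{l}/\lambda \cdot \xi}\hat{g}(\xi)$, so that $\|\tilde{g}_{\lambda}^{\mathfrak{l}}\|_{L^{2}} = \|g\|_{L^{2}}$.

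Next, for each fixed $(j, k)$, condition (C1) makes the map $\Phi_{j,k}: x \mapsto \gamma(x, t_{j}, \theta_{k}^{0})$ bi-Lipschitz on $\mathbb{R}^{2}$ with constants depending only on $C_{1}$. Moreover, (C2) together with $\gamma(x, 0, \theta) = x$ places the image $\Phi_{j, k}(B(x_{0}, R))$ inside a ball of radius $\lesssim R$ whose centre lies in a bounded region uniformly in $(j, k)$. Performing the change of variables $y = \Phi_{j, k}(x)$ and using translation invariance of $e^{it\Delta}$, I would invoke Theorem \ref{lemma3.2} on the resulting unit-scale ball to obtain
\begin{equation*}
\biggl\|\sup_{t \in (0, 1)} |e^{it\Delta}\tilde{g}_{\lambda}^{\mathfrak{l}}(\Phi_{j, k}(x))|\biggr\|_{L^{3}(B(x_{0}, R))} \leq C_{\epsilon}\, \lambda^{1/3 + \epsilon}\, \|g\|_{L^{2}}
\end{equation*}
for each $(j, k, \mathfrak{l})$, with constants independent of these indices.

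Finally, these pieces have to be reassembled into the estimate (\ref{Eq2.1n}). The summation over $\mathfrak{l}$ is harmless thanks to the rapid decay $(1+|\mathfrak{l}|)^{-3}$. The hard part will be the summation over the $\sim \lambda$ time intervals $j$: a naive use of $\sup_{t \in (0, 1)} = \max_{j}\sup_{t \in I_{j}}$ together with $\|\max_{j} a_{j}\|_{L^{3}}^{3} \leq \sum_{j}\|a_{j}\|_{L^{3}}^{3}$ would introduce an extra factor of $\lambda^{1/3}$ and overshoot the target $\lambda^{1/3+2\epsilon/3}$ by exactly that amount. Following the strategy of Lee--Rogers \cite{LR}, I expect this loss to be avoided by exploiting the short-time/bi-Lipschitz scaling so that the $I_{j}$-pieces, after the change of variables, assemble into a single Du--Guth--Li estimate on a standard time-space region rather than being summed independently. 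The uniformity in $k$ required by the statement is automatic, because the bi-Lipschitz constants of $\Phi_{j, k}$, the Fourier coefficients in Lemma \ref{lemma2.2}, and the constant of Theorem \ref{lemma3.2} (after a bounded translation) all depend only on $C_{1}, C_{2}, C_{3}$ and the ambient ball $B(x_{0}, R)$, not on the particular $\theta_{k}^{0}$.
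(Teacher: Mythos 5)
Your setup is on the right track: decomposing $(0,1)$ into $\lambda^{-1}$-intervals $I_j$, snapping $t$ to $t_j$ via Lemma~\ref{lemma2.2}, exploiting (C1) to make $x \mapsto \gamma(x,t_j,\theta_k^0)$ a bi-Lipschitz change of variables, and then applying Theorem~\ref{lemma3.2} on each piece. You also correctly put your finger on the crux: a naive $\ell^3$-summation over the $\sim\lambda$ intervals costs an extra factor of $\lambda^{1/3}$ and kills the result. But at exactly that point your argument stops being a proof. The sentence ``I expect this loss to be avoided by exploiting the short-time/bi-Lipschitz scaling so that the $I_j$-pieces assemble into a single Du--Guth--Li estimate on a standard time-space region'' is a hope, not an argument, and as stated it cannot work: the $x$-change of variables $\Phi_{j,k}$ depends on $j$ (through $t_j$), so there is no single change of variables under which the union of the local pieces becomes one application of Theorem~\ref{lemma3.2}.

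What is actually needed here is the Lee--Rogers time-localizing lemma, recorded in the paper as Theorem~\ref{theorem2.4}. That theorem states that if the estimate
\begin{equation*}
\|e^{it\Delta}f(\rho(x,t))\|_{L^q_\mu L^r_\nu(I)} \le C_0 \|f\|_{L^2}
\end{equation*}
holds on each boundedly overlapping interval $I$ of length $\lambda^{-1}$, and $|\rho|\le M$ on the relevant domain, then the estimate on $\cup I$ holds with constant $\lesssim M^{1/2} C_0$ --- not $\lambda^{1/3} C_0$. This is a genuine theorem (proved in \cite{LR} via $TT^*$-type considerations), not a rearrangement of the triangle inequality, so it must be invoked explicitly. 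Once you do, the sum over $j$ disappears entirely: you only need the estimate on a \emph{single} short interval, which your Steps 1--3 already give. One further hypothesis you must verify is that $|\gamma(x,t,\theta_k^0)|\le M$ uniformly in $k$ on $B(x_0,R)\times(0,1)$ (the paper deduces this from (C2), $\gamma(x,0,\theta)=x$, and the boundedness of $B(x_0,R)$); this is where the $k$-uniformity of the constant comes from, beyond the bi-Lipschitz uniformity you mention. With the localizing lemma inserted and that bound checked, the rest of your outline matches the paper's proof.
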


Now let's turn to prove Lemma \ref{lemma2.3}. The following theorem is required.

\begin{thm}\label{theorem2.4}(\cite{LR})
Let $\rho: \mathbb{R}^{n+1} \rightarrow \mathbb{R}^{n}$, $q, r \in [2, +\infty]$, $\lambda \ge 1$, supp $\nu \subset [-2,2]$, $\lambda \ge \|1\|_{L_{\mu}^{q}L_{\nu}^{r}}^{1/n}$, and suppose that
\[\mathop{sup}_{x \in supp(\nu), t \in supp(\nu)}|\rho(x,t)| \le M,\]
where $M >1$. Suppose that for a collection of boundedly overlapping intervals $I$ of length $\lambda^{-1}$, there exists a $C_{0} >1$ such that
\begin{equation*}
\|e^{it\Delta}f(\rho(x,t))\|_{L_{\mu}^{q}L_{\nu}^{r}(I)} \leq C_0\|f\|_{L^2(\mathbb{R}^n)},
\end{equation*}
whenever $\hat{f}$ is supported in  $\{\xi \in \mathbb{R}^{n}: |\xi| \sim \lambda\}$. Then there is a constant $C_{n} >1$ such that
\begin{equation*}
\|e^{it\Delta}f(\rho(x,t))\|_{L_{\mu}^{q}L_{\nu}^{r}(\cup I)} \leq C_{n} M^{1/2} C_0\|f\|_{L^2(\mathbb{R}^n)}
\end{equation*}
whenever $\hat{f}$ is supported in  $\{\xi \in \mathbb{R}^{n}: |\xi| \sim \lambda\}$.
\end{thm}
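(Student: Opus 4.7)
The plan is a time-localization argument that upgrades the hypothesized single-interval bound to one on the full union $\cup I$, losing only a factor of $M^{1/2}$. I would decompose $\cup I$ into its constituent length-$\lambda^{-1}$ intervals $I_j = [t_j, t_j+\lambda^{-1}]$, reduce each $I_j$ to a reference interval via the time-translation $s = t - t_j$, and then aggregate the per-interval estimates using the orthogonality between intervals.

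The reduction step is routine. Writing $f_j := e^{it_j\Delta}f$, one has $\|f_j\|_{L^2} = \|f\|_{L^2}$ and $f_j$ retains the frequency support in $\{|\xi|\sim\lambda\}$, so the hypothesis yields
$$\|e^{it\Delta}f(\rho(x,t))\|_{L^q_\mu L^r_\nu(I_j)} = \|e^{is\Delta}f_j(\rho(x, s + t_j))\|_{L^q_\mu L^r_\nu([0,\lambda^{-1}])} \le C_0\|f\|_{L^2}$$
uniformly in $j$. A direct Minkowski summation in $j$ then produces a prefactor of $N^{1/r}$ with $N$ the number of intervals; since $N$ can be as large as $\lambda |\mathrm{supp}(\nu)|$, this is much weaker than the target $M^{1/2}$. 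To sharpen the summation I would exploit two facts: first, the frequency constraint forces the map $t\mapsto e^{it\Delta}f(y)$ to be band-limited at time-frequency $\sim\lambda^2$, so a Bernstein / sampling inequality converts $L^r_\nu(I_j)$-norms into comparable $L^2$-averages; second, the range bound $|\rho|\le M$ allows insertion of a smooth spatial cutoff to $B(0,M)$ without changing the left-hand side. A $TT^{\star}$ or Plancherel step then assembles the intervals via their $L^2$-orthogonality, and a Cauchy--Schwarz pairing against the Schr\"odinger kernel integrated over $B(0,M)$ should collapse the loss down to $M^{1/2}$.

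The main obstacle will be to pin the $M$-exponent at exactly $1/2$ rather than the cruder $M^{n/2}$ that a bald volume count produces. Getting to $M^{1/2}$ requires a grouping adapted to the decay of the Schr\"odinger propagator: tile $B(0,M)$ by $\lambda^{-1}$-cubes, gather the intervals $I_j$ whose image under $\rho$ meets a given cube, run the single-interval estimate on each sub-family, and apply Cauchy--Schwarz so that the volume factor $M^n$ enters only under a square root. Balancing the number of tiles ($\sim (M\lambda)^n$) against the $TT^{\star}$ decay between distinct tiles is the delicate point, and this is where the detailed argument of Lee--Rogers (cited here) becomes essential; the hypothesis $\lambda \ge \|1\|_{L^q_\mu L^r_\nu}^{1/n}$ should enter precisely to ensure that this balance is feasible.
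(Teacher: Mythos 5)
The paper does not supply a proof of Theorem~\ref{theorem2.4}; it is quoted verbatim from Lee--Rogers~\cite{LR} (their time-localization lemma), so there is no in-paper proof to compare against. Assessed on its own terms, your proposal has a genuine gap precisely at the step that carries all the content. Reducing to a reference interval by time-translation, invoking that for each single length-$\lambda^{-1}$ interval $I_j$ one has $\|e^{it\Delta}f(\rho(x,t))\|_{L^q_\mu L^r_\nu(I_j)}\le C_0\|f\|_{L^2}$, and then summing via $\ell^2\hookrightarrow\ell^r$ plus Minkowski (legitimate since $q,r\ge 2$) is the easy part; it yields
\[
\|e^{it\Delta}f(\rho(x,t))\|_{L^q_\mu L^r_\nu(\cup I)}\le\Bigl(\sum_j\|e^{it\Delta}f(\rho(x,t))\|_{L^q_\mu L^r_\nu(I_j)}^2\Bigr)^{1/2},
\]
and a crude application of the hypothesis on the right gives a factor of $(\#\{I_j\})^{1/2}\sim\lambda^{1/2}$, not $M^{1/2}$. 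Everything that turns $\lambda^{1/2}$ into $M^{1/2}$ is compressed into your sentences about ``tiling $B(0,M)$ by $\lambda^{-1}$-cubes,'' ``$TT^{\star}$ decay between distinct tiles,'' and ``balancing'' --- and you explicitly admit this is exactly where you defer to Lee--Rogers. By your own accounting the Cauchy--Schwarz-over-tiles step produces $M^{n/2}$, and no mechanism for recovering the extra $M^{-(n-1)/2}$ is offered. So the proposal is a sketch of the framing, not a proof.

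Two further cautions. First, ``orthogonality between intervals'' needs to be made precise: $e^{it\Delta}$ is unitary on $L^2$, so for a fixed $f$ different time slices carry the same $L^2$ mass; any genuine almost-orthogonality must come from a frequency (or velocity) decomposition of $f$ correlated with the time intervals, and the role of the range bound $|\rho|\le M$ is to limit how many such pieces can interact with a bounded spatial window. You gesture at this but never set it up. Second, the hypothesis $\lambda\ge\|1\|_{L^q_\mu L^r_\nu}^{1/n}$ is load-bearing in the Lee--Rogers argument (it controls the total mass of the measures so that the summation closes), but in your outline it appears only as a vague remark at the end. A complete proof would have to show concretely where this normalization enters. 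As written, the proposal identifies the right flavour of argument but leaves the central quantitative step unproved.
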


Notice that in our case, for each $k$, we have
\[\mathop{sup}_{(x,t) \in B(x_{0},R) \times (0,1)}|\gamma(x,t,\theta_{k}^{0})| \le \mathop{sup}_{(x,t,\theta) \in B(x_{0},R) \times (0,1) \times \Theta}|\gamma(x,t,\theta)|.\]
By inequality (\ref{Eq1.4}), for each $(x,t,\theta) \in B(x_{0},R) \times (0,1) \times \Theta$,
\[|\gamma(x,t,\theta) - \gamma(x,0,\theta)| \le C_{2},\]
then $|\gamma(x,t,\theta)|$ is uniformly bounded for $(x,t,\theta) \in B(x_{0},R) \times (0,1) \times \Theta$, and the upper bound is  determined by $C_{2}$ and the chosen of $B(x_{0},R)$, but independent of $k$.

Therefore, according to Theorem \ref{theorem2.4}, in order to prove Lemma \ref{lemma2.3}, we only need to show that for each interval $I \subset (0,1)$ of length $\lambda^{-1}$, and any function $g$ such that $\hat{g}$ is supported in  $\{\xi \in \mathbb{R}^{2}: |\xi| \sim \lambda\}$, we have
\begin{align}\label{Eq2.11}
\biggl\|\mathop{sup}_{t \in I} |e^{it\Delta}g(\gamma(x,t,\theta_{k}^{0}))|\biggl\|_{L^{3}(B(x_{0},R))} \le C \lambda^{\frac{1}{3}+ \frac{2\epsilon}{3}} \|g\|_{L^{2}}.
\end{align}
 Since $I$ is an interval of length $\lambda^{-1}$, there exists $t_{I}^{0} \in I$ such that for each $t \in I$,
 \[ |t - t_{I}^{0}| \le \lambda^{-1}.\]
 Then by Lemma \ref{lemma2.2},
for each $x \in B(x_{0},R)$, $t \in I$, we have
\begin{align}\label{Eq2.8}
|e^{it\Delta}g(\gamma(x,t,\theta_{k}^{0}))| &\le \sum_{\mathfrak{l} \in \mathbb{Z}^{2}}{\frac{C}{(1+|\mathfrak{l}|)^{3}}\biggl|\int_{\mathbb{R}^{2}}{e^{i \gamma(x,t_{I}^{0},\theta_{k}^{0})  \cdot\xi+it|\xi|^{2}} e^{i\frac{\mathfrak{l}}{\lambda} \cdot \xi}\hat{g}}(\xi)d\xi \biggl|} \nonumber\\
&= \sum_{\mathfrak{l} \in \mathbb{Z}^{2}}{\frac{C}{(1+|\mathfrak{l}|)^{3}}\biggl| e^{it\Delta} g_{\lambda}^{\mathfrak{l}} (\gamma(x,t_{I}^{0},\theta_{k}^{0}))  \biggl|},
\end{align}
where
\[ \hat{g_{\lambda}^{\mathfrak{l}}} (\xi)=e^{i\frac{\mathfrak{l}}{\lambda} \cdot \xi}\hat{g}(\xi). \]
It follows that
\begin{align}\label{Eq2.13}
\biggl\|\mathop{sup}_{t \in I} |e^{it\Delta}g(\gamma(x,t,\theta_{k}^{0}))|\biggl\|_{L^{3}(B(x_{0},R))}
\le \sum_{\mathfrak{l} \in \mathbb{Z}^{2}} \frac{C}{(1+|\mathfrak{l}|)^{3}} \biggl\|\mathop{sup}_{t \in I} |e^{it\Delta}g_{\lambda}^{\mathfrak{l}}(\gamma(x,t_{I}^{0},\theta_{k}^{0}))|\biggl\|_{L^{3}(B(x_{0},R))}.
\end{align}
For each $t_{I}^{0}$, $\theta_{k}^{0}$, $\gamma_{t_{I}^{0}, \theta_{k}^{0}}$ is at least $C^{1}$ from $\mathbb{R}^{2}$ to $\mathbb{R}^{2}$. By inequality (\ref{Eq1.3}), for each $x \in \mathbb{R}^{2}$,
\[C_{1}^{-1} \le |\nabla_{x}\gamma(x,t_{I}^{0},\theta_{k}^{0})| \le C_{1}.\]
By the same reason, for each $x \in B(x_{0},R)$,
\[|\gamma(x,t_{I}^{0},\theta_{k}^{0}) - \gamma(x_{0},t_{I}^{0},\theta_{k}^{0})| \le C_{1}R,\]
which implies $\gamma_{t_{I}^{0}, \theta_{k}^{0}}(B(x_{0},R)) \subset B( \gamma(x_{0},t_{I}^{0},\theta_{k}^{0}),C_{1}R)$. Therefore, changes of variables and Theorem \ref{lemma3.2} imply that
\begin{align}\label{Eq2.14}
\biggl\|\mathop{sup}_{t \in I} |e^{it\Delta}g_{\lambda}^{\mathfrak{l}}(\gamma(x,t_{I}^{0},\theta_{k}^{0}))|\biggl\|_{L^{3}(B(x_{0},R))} \le C \lambda^{\frac{1}{3}+ \frac{2\epsilon}{3}} \|g_{\lambda}^{\mathfrak{l}}\|_{L^{2}}.
\end{align}
Combining inequality (\ref{Eq2.13}) with inequality (\ref{Eq2.14}), we have
\begin{align}
&\biggl\|\mathop{sup}_{t \in I} |e^{it\Delta}g(\gamma(x,t,\theta_{k}^{0}))|\biggl\|_{L^{3}(B(x_{0},R))} \nonumber\\
&\le \sum_{\mathfrak{l} \in \mathbb{Z}^{2}} \frac{C}{(1+|\mathfrak{l}|)^{3}} \lambda^{\frac{1}{3}+ \frac{2\epsilon}{3}} \|g_{\lambda}^{\mathfrak{l}}\|_{L^{2}} \nonumber\\
&\le C\lambda^{\frac{1}{3}+ \frac{2\epsilon}{3}} \|g\|_{L^{2}}.
\end{align}
This completes the proof of Lemma \ref{lemma2.3}.

\textbf{Proof of Theorem \ref{theorem1.2}.} The proof of Theorem \ref{theorem1.2} is quite standard. We write the details for completeness. In fact, for any  $s> \frac{\beta(\Theta)+1}{3}$, $f \in H^{s}(\mathbb{R}^{2})$, fix $\lambda >0$,  choose $g \in C_{c}^{\infty}(\mathbb{R}^{2})$ such that
\[\|f-g\|_{H^{s}(\mathbb{R}^{2})} \le  \frac{\lambda \epsilon^{1/3}}{2C},\]
where the constant $C$ is the constant in inequality (\ref{Eq1.7+}), which follows
\begin{align}\label{Eq2.4n}
&\biggl|\biggl\{ x \in B(x_{0},R): \mathop{sup}_{(t,\theta) \in (0,1) \times \Theta} |e^{it\Delta}(f-g)(\gamma(x,t,\theta))| > \frac{\lambda}{2}\biggl\}\biggl|  \nonumber\\
&\le \frac{2^{3}}{\lambda^{3}} \biggl\|\mathop{sup}_{(t,\theta) \in (0,1) \times \Theta} |e^{it\Delta}(f-g)(\gamma(x,t,\theta))|\biggl\|_{L^{3}(B(x_{0},R))}^{3} \nonumber\\
&\le \frac{2^{3}C^{3}}{\lambda^{3}}\|f-g\|_{H^{s}(\mathbb{R}^{2})}^{3} \nonumber\\
&\le \epsilon.
\end{align}
Moreover,
\begin{align}\label{Eq2.5n}
 \lim_{\substack{(y,t) \rightarrow (x,0) \\ y\in \Gamma_x}} e^{it\Delta}g(y) =g(x)
 \end{align}
uniformly for $x \in B(x_{0},R)$. Indeed, for each $x \in B(x_{0},R)$,
\begin{align}\label{Eq2.6n}
\limsup_{\substack{(y,t) \rightarrow (x,0) \\ y\in \Gamma_x}} |e^{it\Delta}g(y) - g(x)| &\le \limsup_{\substack{(y,t) \rightarrow (x,0) \\ y\in \Gamma_x}} |e^{it\Delta}g(y) - e^{it\Delta}g(x)| + \limsup_{\substack{(y,t) \rightarrow (x,0) \\ y\in \Gamma_x}} |e^{it\Delta}g(x) - g(x)| \nonumber\\
&= \limsup_{\substack{(y,t) \rightarrow (x,0) \\ y\in \Gamma_x}} |e^{it\Delta}g(y) - e^{it\Delta}g(x)| + \limsup_{t \rightarrow 0^{+}} |e^{it\Delta}g(x) - g(x)|.
\end{align}
By mean value theorem and inequality (\ref{Eq1.4}), we have
\begin{align}\label{Eq2.7n}
| e^{it\Delta}(g)(\gamma(x,t,\theta))-e^{it\Delta}g(x)| \le t\int_{\mathbb{R}^{2}}{|\xi||\hat{g}(\xi)|d\xi},
\end{align}
and
\begin{align}\label{Eq2.8n}
|e^{it\Delta}g(x) -g(x)| \le t\int_{\mathbb{R}^{2}}{|\xi|^{2}|\hat{g}(\xi)|d\xi}.
\end{align}
Inequalities (\ref{Eq2.6n}) - (\ref{Eq2.8n}) imply (\ref{Eq2.5n}).

By (\ref{Eq2.4n}) and (\ref{Eq2.5n}) we have
\begin{align}
&\biggl|\biggl\{{ x \in B(x_{0},R):  \limsup_{\substack{(y,t) \rightarrow (x,0) \\ y\in \Gamma_x}} |e^{it\Delta}(f)(y)-f(x)|} > \lambda\biggl\}\biggl| \nonumber\\
&\le \biggl|\biggl\{{ x \in B(x_{0},R):  \limsup_{\substack{(y,t) \rightarrow (x,0) \\ y\in \Gamma_x}} |e^{it\Delta}(f-g)(y)|} > \frac{ \lambda}{2}\biggl\}\biggl| \nonumber\\
&\quad\quad +\biggl|\biggl\{{ x \in B(x_{0},R):  |f(x)-g(x)|} > \frac{ \lambda}{2}\biggl\}\biggl| \nonumber\\
&\le \biggl|\biggl\{ x \in B(x_{0},R): \mathop{sup}_{(t,\theta) \in (0,1) \times \Theta} |e^{it\Delta}(f-g)(\gamma(x,t,\theta))| > \frac{\lambda}{2}\biggl\}\biggl|  \nonumber\\
&\quad\quad +\biggl|\biggl\{{ x \in B(x_{0},R):  |f(x)-g(x)|} > \frac{ \lambda}{2}\biggl\}\biggl| \nonumber\\
&\lesssim \epsilon + \frac{2^{2}}{\lambda^{2}}\|f-g\|_{H^{s}(\mathbb{R}^{2})}^{2} \nonumber\\
&\le \epsilon +\frac{\epsilon^{\frac{2}{3}}}{C^{2}} \nonumber\\
&\le \epsilon +\epsilon^{\frac{2}{3}},
\end{align}
since we can always assume that $C \ge 1$, which  implies convergence for $f \in H^{s}(\mathbb{R}^{2})$ and almost every $x \in B(x_{0}, R)$. By the arbitrariness of $B(x_{0}, R)$, in fact we can get convergence for almost every $x \in \mathbb{R}^{2}$. This completes the proof of Theorem \ref{theorem1.2}.

\section{Proof of the main theorems for $n \ge 3$}
\textbf{Proof of Theorem \ref{theorem1.4}.} We briefly explain the proof of Theorem \ref{theorem1.4} since most of the details are similar to the proof of Theorem \ref{theorem1.1}. As in the proof of Theorem \ref{theorem1.1}, we only need to prove that for $f$, supp$\hat{f} \subset \{\xi \in \mathbb{R}^{n}: |\xi| \sim \lambda\}$, $\lambda \gg 1$,
 \begin{equation}\label{Eq3.1}
\biggl\|\mathop{sup}_{(t,\theta) \in (0,1) \times \Theta} |e^{it\Delta}f(\gamma(x,t,\theta))|\biggl\|_{L^{p}(B(x_{0},R))} \le C\lambda^{s_{0}+\epsilon} \|f\|_{L^{2}}, \:\ \forall \epsilon >0,
\end{equation}
where $s_{0}=\frac{\beta(\Theta)}{p} + \frac{n}{2(n+1)}$.

 We decompose $\Theta$ into subsets $\Theta= \cup_{k}\Theta_{k}$ with bounded overlap, where each $\Theta_{k}$ is contained in a closed ball with  diameter $\lambda^{-1}$. Then we have
\begin{equation*}
1 \le k \le \lambda^{\beta(\Theta) + \epsilon}.
\end{equation*}
As in the proof of Theorem \ref{theorem1.1}, we can show that for each $k$,
 \begin{equation*}
\biggl\|\mathop{sup}_{(t,\theta) \in (0,1) \times \Theta_{k}} |e^{it\Delta}f(\gamma(x,t,\theta))|\biggl\|_{L^{p}(B(x_{0},R))} \le C\lambda^{\frac{n}{2(n+1)}+\frac{(p-1)\epsilon}{p}} \|f\|_{L^{2}}.
\end{equation*}
Then  we have
\begin{align}
\biggl\|\mathop{sup}_{(t,\theta) \in (0,1) \times \Theta} |e^{it\Delta}f(\gamma(x,t,\theta))|\biggl\|_{L^{p}(B(x_{0},R))}
&\le \biggl(\sum_{k} \biggl\|\mathop{sup}_{(t,\theta) \in (0,1) \times \Theta_{k}} |e^{it\Delta}f(\gamma(x,t,\theta))|\biggl\|_{L^{p}(B(x_{0},R))}^{p}  \biggl)^{1/p} \nonumber\\
&\le C\biggl(\sum_{k} \lambda^{\frac{np}{2(n+1)}+(p-1)\epsilon} \|f\|_{L^{2}}^{p} \biggl)^{1/p} \nonumber\\
&\le  C\lambda^{\frac{\beta{(\Theta)}}{p} +\frac{n}{2(n+1)} +\epsilon} \|f\|_{L^{2}}, \nonumber
\end{align}
which implies inequality (\ref{Eq3.1}).

\textbf{Proof of Theorem \ref{theorem1.5}.} Taking $\gamma (x,t,\theta) = x + t\theta$, $\Theta$ is  the  interior of the unit ball in $\mathbb{R}^{n}$. Then we have
\begin{equation}\label{Eq3.2}
\beta(\Theta) = n,
\end{equation}
and the approach region
\begin{equation}\label{Eq3.3}
\Gamma_{x} = \{\gamma(x,t,\theta):t\in [0,1], \theta\in \Theta\} =\{y: |y-x| < t, t\in [0,1]\}.
\end{equation}
Assuming that (\ref{Eq1.8}) holds true, then it follows from Theorem \ref{theorem1.4} and inequality (\ref{Eq3.3}) that for any $s > \frac{\beta(\Theta)}{p} + \frac{n}{2(n+1)}$,
 \begin{equation}
 \lim_{\substack{(y,t) \rightarrow (x,0) \\ |x-y| < t, t >0}} e^{it\Delta}f(y) = f(x) \hspace{0.2cm} a.e.
 \end{equation}
whenever $f\in H^s(\mathbb{R}^2)$. But according to Theorem 3 in \cite{SS} by Sj\"{o}lin-Sj\"{o}gren, this result fails for any $s \le \frac{n}{2}$. Therefore, we get
\begin{equation}\label{Eq3.5}
 \frac{\beta(\Theta)}{p} + \frac{n}{2(n+1)} \ge \frac{n}{2}.
\end{equation}
Then inequality (\ref{Eq3.5}) and inequality (\ref{Eq3.2}) imply Theorem \ref{theorem1.5}.


\begin{flushleft}
\vspace{0.3cm}\textsc{Wenjuan Li\\School of Mathematics and Statistics\\Northwest Polytechnical University\\710129\\Xi'an, People's Republic of China}

\vspace{0.3cm}\textsc{Huiju Wang\\School of Mathematics Sciences\\University of Chinese Academy of Sciences\\100049\\Beijing, People's Republic of China}

\vspace{0.3cm}\textsc{Dunyan Yan\\School of Mathematics Sciences\\University of Chinese Academy of Sciences\\100049\\Beijing, People's Republic of China}

\end{flushleft}

\end{document}